\theoremstyle{plain}
\newtheorem{Thm}{Theorem}[section]
\newtheorem{Lem}[Thm]{Lemma}
\newtheorem{Prop}[Thm]{Proposition}
\newtheorem{Cor}[Thm]{Corollary}
\theoremstyle{definition}
\tikzstyle{vertex}=[circle, draw, inner sep=0pt, minimum size=6pt] 
\title{On the limit of the sequence $\left\{ C^m(D) \right\}_{m=1}^{\infty}$ for a multipartite tournament $D$}
\author[1]{\small Ji-Hwan Jung}
\author[2]{\small Suh-Ryung Kim}
\author[2]{\small Hyesun Yoon}
\affil[1]{\footnotesize Department of Mathematics Education, Chinju National University of Education, Jinju 52673}
\affil[2]{\footnotesize Department of Mathematics Education, Seoul National University, Seoul 08826}
\affil[ ]{\footnotesize\textit{jhjung@cue.ac.kr, srkim@snu.ac.kr, magisakura@snu.ac.kr}}
\date{}
\begin{document}
\maketitle
\begin{abstract}
For an integer $k \ge 2$, let $A$ be a Boolean block matrix with blocks $A_{ij}$ for $1 \le i,j \le k$ such that $A_{ii}$ is a zero matrix and $A_{ij}+A_{ji}^T$ is a matrix with all elements $1$ but not both corresponding elements of $A_{ij}$ and $A_{ji}^T$ equal to $1$ for $i \neq j$.

Jung~{\em et al.} [Competition periods of multipartite tournaments. {\it Linear and Multilinear Algebra}, https://doi.org/10.1080/03081087.2022.2038057] studied the matrix sequence $\{A^m(A^T)^m\}_{m=1}^{\infty}$.
This paper, which is a natural extension of the above paper and was initiated by the observation that $\{A^m(A^T)^m\}_{m=1}^{\infty}$ converges if $A$ has no zero rows, computes the limit of the matrix sequence $\{A^m(A^T)^m\}_{m=1}^{\infty}$ if $A$ has no zero rows.
To this end, we take a graph theoretical approach:
noting that $A$ is the adjacency matrix of a multipartite tournament $D$, we compute the limit of the graph sequence $\left\{ C^m(D) \right\}_{m=1}^{\infty}$ when $D$ has no sinks.
\end{abstract}
\noindent
{\it Keywords.}
$m$-step competition graph, multipartite tournament, limit of a Boolean matrix sequence, limit of a graph sequence, index of imprimitivity, sets of imprimitivity

\smallskip
\noindent
{{{\it 2010 Mathematics Subject Classification.} 05C20, 05C75}}

\section{Introduction}
The underlying graph of each digraph in this paper is assumed to be simple unless otherwise mentioned.

We use the notation $u \to v$ for ``there is an arc $(u,v)$'' and the notation $X \to Y$ for ``there is an arc $(x,y)$ for each $x \in X$ and for each $y \in Y$'' in the given digraph.
For simplicity, we omit the braces if $X$ or $Y$ is a singleton set.

Given a digraph $D$ and a positive integer $m$, a vertex $y$ is an {\em $m$-step prey} of a vertex $x$ if and only if there exists a directed walk from $x$ to $y$ of length $m$.
Given a positive integer $m$, the {\em $m$-step competition graph} of a digraph $D$, denoted by $C^m(D)$, has the same vertex set as $D$ and has an edge between vertices $u$ and $v$ if and only if there exists an $m$-step common prey of $u$ and $v$ in $D$.
The notion of an $m$-step competition graph introduced by Cho~{\em et al.}~\cite{cho2000m} is a generalization of the competition graph introduced by Cohen~\cite{cohen1968interval} (the {\em competition graph} of a digraph $D$ is $C^1(D)$).
Since its introduction, an $m$-step competition graph  has been extensively studied (see, for example, \cite{belmont2011complete,helleloid2005connected,ho2005m,park2011m,zhao2009note}).

For the two-element Boolean algebra $\mathcal{B}=\{0,1\}$, $\mathcal{B}_n$ denotes the set of all $n \times n$ matrices over $\mathcal{B}$.
Under the Boolean operations ($1 + 1 = 1$, $0 + 0 = 0$, $1 + 0 = 1$, $1 \times 1 = 1$, $0 \times 0 = 0$, $1 \times 0 = 0$), matrix addition and multiplication are still well-defined in $\mathcal{B}_n$.
Throughout this paper, a matrix is Boolean unless otherwise mentioned.

We note that the adjacency matrix of $C^m(D)$ for a digraph $D$ of order $n$ is the matrix $A^*_{m}$ obtained from $A^m(A^T)^m$ by replacing each of diagonal elements with $0$ where $A$ is the adjacency matrix of $D$.
To see why, we take two distinct vertices $u$ and $v$ of $D$ and suppose that the $i$th row and the $j$th row of $A$ are the rows corresponding to $u$ and $v$, respectively.
Then
\begin{tabbing}
\ \ \ \ \ \ \= $u$ and $v$ are adjacent in $C^m(D)$ \\
$\Leftrightarrow$ \>  $u$ and $v$ have an $m$-step common prey in $D$ \\
$\Leftrightarrow$ \> the inner product of the $i$th row and the $j$th row of $A^m$ is $1$\\
$\Leftrightarrow$ \>  the $(i,j)$-entry of $A^*_m$ is $1$.
\end{tabbing}
It is easy to check that $A^*_i=A^*_j$ if and only if $A^i(A^T)^i=A^j(A^T)^j$ for a $(0,1)$ Boolean matrix $A$ of order $n$ and any positive integers $i$ and $j$.
Therefore, for a digraph $D$ and its adjacency matrix $A$,
\begin{equation}\label{eq:iff}
C^i(D)=C^j(D) \Leftrightarrow A^i(A^T)^i=A^j(A^T)^j
\end{equation}
for any positive integers $i$ and $j$.

A \emph{$k$-partite tournament} is an orientation of a complete $k$-partite graph for a positive integer $k$.
	The adjacency matrix of a $k$-partite tournament for $k \ge 2$ is represented as a block matrix $A$ with blocks $A_{ij}$ for $1 \le i,j \le k$ such that $A_{ii}$ is a zero matrix and $A_{ij}+A_{ji}^T$ is a matrix with all elements $1$ but not both corresponding elements of $A_{ij}$ and $A_{ji}^T$ equal to $1$ for $i \neq j$ (see $D$ and $A$ in Figure~\ref{fig:multipartitematrix} for an example).

We call a $k$-partite tournament a {\em multipartite tournament} if $k \ge 2$.

Given a Boolean matrix $A$ of order $n$, consider the matrix sequence $\{A^m(A^T)^m\}_{m=1}^{\infty}$.
Since the cardinality of the Boolean matrix set $\mathcal{B}_n$ is equal to $2^{n^2}$ which is finite, there is the smallest positive integer $q$ such that, for some positive integer $r$, $A^{q+i}(A^T)^{q+i}=A^{q+r+i}(A^T)^{q+r+i}$ for every nonnegative integer $i$.
Then there is also the smallest positive integer $p$ such that $A^{q}(A^T)^q=A^{q+p}(A^T)^{q+p}$.
Those integers $q$ and $p$ are called the {\it competition index} and the {\it competition period} of $A$, respectively, which  Cho and Kim~\cite{cho2013competition} introduced.
If $D$ is the digraph whose adjacency matrix is $A$, then, by \eqref{eq:iff}, $q$ and $p$ are the smallest integers such that, for some positive integer $r$, $C^{q+i}(D)=C^{q+r+i}(D)$ for all nonnegative integers $i$ and $C^{q}(D)=C^{q+p}(D)$, respectively.
In this respect, $q$ and $p$ are called the \emph{competition index} and the \emph{competition period} of $D$, respectively, and denoted by ${\rm cindex}(D)$ and ${\rm cperiod}(D)$, respectively, which Cho and Kim~\cite{cho2004competition} introduced.
Refer to \cite{kim2010generalized,kim2015characterization} for some results of competition indices and competition periods of digraphs.

Jung~{\em et al.}~\cite{jung2022competition} computed the competition period of a
multipartite tournament.

\begin{Thm}[\cite{jung2022competition}]\label{thm:main}
The competition period of a $k$-partite tournament $D$ is at most three for any integer $k\ge 2$.
Further, if $k=2$, then the competition period of $D$ is at most two.
\end{Thm}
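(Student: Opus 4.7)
My plan is to reduce the problem to the strongly connected case, relate the competition period to the index of imprimitivity, and then use the rigid structure of multipartite tournaments to bound that index.

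First, I would observe that for $m$ sufficiently large, every $m$-step prey in $D$ lies in some \emph{sink strong component}, that is, a strong component whose vertices cannot reach any other strong component of $D$. This follows because the condensation of $D$ is a DAG, so walks of length $m \gg 0$ must terminate inside a sink. Thus the eventual behavior of $C^m(D)$ is controlled by the restriction of $D$ to its sinks, together with the fixed data of how many steps each non-sink vertex needs to reach each sink (modulo small integers). Each such sink is itself a strongly connected multipartite tournament on some subset of the partite sets, so it suffices to analyze these.

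For a strongly connected digraph $S$, I would invoke the classical fact that $\{C^m(S)\}_{m=1}^{\infty}$ is eventually periodic with period dividing the \emph{index of imprimitivity} $d(S)$, defined as the gcd of all cycle lengths in $S$. Indeed, $V(S)$ admits the cyclic partition $U_0, \ldots, U_{d(S)-1}$ with every arc going $U_i \to U_{(i+1)\bmod d(S)}$, so an $m$-step walk from $U_i$ lands in $U_{(i+m)\bmod d(S)}$, forcing $C^{m+d(S)}(S)=C^m(S)$ once transients have died out. Hence it suffices to bound $d(S)$.

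The main technical step is the claim that $d(S) \le 3$ always, and $d(S) \le 2$ when $k=2$. The bipartite case is immediate since every cycle in a bipartite tournament has even length, giving $d(S) \mid 2$. For $k \ge 3$ I would fix a shortest cycle $C$ in $S$: if $|C|=3$, then $d(S) \mid 3$ and we are done; otherwise $|C|\ge 4$, and I would exploit the defining property of multipartite tournaments (between any two vertices in different parts there is exactly one arc) to reroute $C$ through a vertex of a third part, producing a second cycle whose length differs from $|C|$ by at most $2$. The gcd of the two cycle lengths then forces $d(S)\le 3$. Finally, one must reassemble the sink bounds into a bound for $D$: naively the competition period of $D$ is the $\operatorname{lcm}$ of the periods of its sinks, which a priori could inflate a mixture of $2$'s and $3$'s into $6$, so an additional compatibility argument (again using the tight multipartite structure, together with how phase data propagates from non-sink vertices into each sink) is needed to show this does not happen. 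The principal obstacle of the whole program is this coupled pair of claims: bounding $d(S)\le 3$ inside a single sink for $k\ge 3$, and then ruling out the simultaneous occurrence of $d(S)=2$ and $d(S')=3$ across different sinks of the same multipartite tournament.
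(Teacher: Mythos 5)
First, a caveat: the paper does not prove Theorem~\ref{thm:main} at all --- it is quoted from Jung \emph{et al.}~\cite{jung2022competition} --- so there is no in-paper proof to compare against, and I can only assess your outline on its own terms. It has three concrete gaps. (1) The opening reduction is false: for large $m$, an $m$-step prey need \emph{not} lie in a sink strong component, because a vertex of any nontrivial strong component $Q$ has $m$-step prey inside $Q$ for every $m$ (simply circulate within $Q$ and stop after $m$ steps). Hence the eventual behaviour of $C^m(D)$ is governed by \emph{all} nontrivial strong components together with how outside vertices feed into them, not by the sinks alone; this is precisely why the present paper, even in the convergent case, must separately control edges arising from prey in non-terminal components (Proposition~\ref{prop:not partite-related}, Lemma~\ref{lem:clique}). (2) Your bipartite step has the divisibility backwards. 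That every directed cycle of a bipartite tournament has even length gives $2 \mid d(S)$, not $d(S) \mid 2$; in fact $d(S)\in\{2,4\}$ for a strongly connected bipartite tournament (Proposition~\ref{prop:index}; a directed $4$-cycle already has $d=4$). Combined with your claim that the competition period divides $d(S)$, this would only yield the bound $4$ for $k=2$, not $2$. The true reason the period is still at most $2$ is structural: a vertex of partite set $V_1$ lying outside $S$ beats \emph{both} imprimitivity classes of $S$ contained in $V_2$ simultaneously, so the phase it sees in $S$ advances two classes at a time; your outline never isolates this, and without it the $k=2$ assertion is unproven.

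(3) You correctly identify the $\operatorname{lcm}$ problem as ``the principal obstacle'' --- and it is genuinely possible for one strong component of a tripartite tournament to be a bipartite tournament with $d=4$ while another has $d=3$, so the naive bound really would be $12$ --- but you then leave this step entirely unresolved. That compatibility argument (showing how the multipartite arc structure forces the contributions of components with incompatible indices to collapse to a common small period) is the heart of the theorem, not a finishing touch, so as written the proposal does not constitute a proof even in outline. A secondary imprecision worth flagging: for a strongly connected $S$ in isolation the sequence $\{C^m(S)\}$ actually \emph{converges} (Theorem~\ref{thm:strong}), so ``period dividing $d(S)$'' is the right slogan only for the edges that vertices outside $S$ acquire through prey in $S$; conflating the two obscures exactly where the periods $2$ and $3$ can come from, namely the interaction between components in the presence of sinks.
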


\begin{figure}
\begin{center}
\begin{minipage}[c]{.45\textwidth}
\begin{center}
\begin{tikzpicture}[auto,thick,scale=1]
    \tikzstyle{player}=[minimum size=5pt,inner sep=0pt,outer sep=0pt,ball color=black,circle]
    \tikzstyle{source}=[minimum size=5pt,inner sep=0pt,outer sep=0pt,ball color=black, circle]
    \tikzstyle{arc}=[minimum size=5pt,inner sep=1pt,outer sep=1pt, font=\footnotesize]
    \path (150:2cm)  node [player, label=left:$v_2$]  (1) {};
    \path (180:2cm)  node [player, label=left:$v_3$]  (2) {};
    \path (210:2cm)  node [player, label=left:$v_4$]  (3) {};
    \path (310:2cm)  node [player, label=right:$v_5$]  (4) {};
    \path (355:1.5cm)  node [player, label=right:$v_6$]  (5) {};
    \path (80:2cm)  node [player, label=above:$v_1$]  (6) {};
\draw[black,thick,-stealth] (1) - +(5);
\draw[black,thick,-stealth] (3) - +(4);
\draw[black,thick,-stealth] (4) - +(1);
\draw[black,thick,-stealth] (4) - +(2);
\draw[black,thick,-stealth] (2) - +(5);
\draw[black,thick,-stealth] (5) - +(3);
\draw[black,thick,-stealth] (6) - +(1);
\draw[black,thick,-stealth] (6) - +(2);
\draw[black,thick,-stealth] (6) - +(3);
\draw[black,thick,-stealth] (6) - +(4);
\draw[black,thick,-stealth] (6) - +(5);
\draw (0,-3) node{$D$};
\draw[dotted, thick] (-1.8, 0) ellipse (1 and 1.8);
\draw[dotted, thick] (1.3, -0.7) ellipse (1 and 1.3);
\draw[dotted, thick] (0.3, 2) ellipse (0.7 and 0.7);

    \end{tikzpicture}
\end{center}
\end{minipage}
\begin{minipage}[c]{.45\textwidth}
\begin{tikzpicture}
		[every matrix/.style={column sep = {1.5em,between origins},row sep={1.5em,between origins}}]
		\matrix (m) [matrix of math nodes,ampersand replacement=\&, left delimiter=(,right delimiter=)]
		{
			{0} \& {1} \& {1} \& {1} \& {1} \& {1} \\
			{0} \& {0} \& {0} \& {0} \& {0} \& {1} \\
			{0} \& {0} \& {0} \& {0} \& {0} \& {1} \\
			{0} \& {0} \& {0} \& {0} \& {1} \& {0} \\
			{0} \& {1} \& {1} \& {0} \& {0} \& {0} \\
			{0} \& {0} \& {0} \& {1} \& {0} \& {0} \\
		};
		
		\matrix (m2) [matrix of nodes,ampersand replacement=\&, left=0.4em of m]
		{
			{$v_1$} \\
			{$v_2$} \\
			{$v_3$} \\
			{$v_4$} \\
			{$v_5$} \\
			{$v_6$} \\
		};
		
		\matrix (m3) [matrix of nodes,ampersand replacement=\&, above=-0.4em of m]
		{
			{$v_1$} \& {$v_2$} \& {$v_3$} \& {$v_4$} \& {$v_5$} \& {$v_6$} \\
		};
\draw (0,-2.8) node{$A$};
\draw[-] ($0.5*(m-1-1.north east)+0.5*(m-1-2.north west)$) -- ($0.5*(m-6-1.south east)+0.5*(m-6-2.south west)$);
\draw[-] ($0.5*(m-1-1.south west)+0.5*(m-2-1.north west)$) -- ($0.5*(m-1-6.south east)+0.5*(m-2-6.north east)$);
\draw[-] ($0.5*(m-1-4.north east)+0.5*(m-1-5.north west)$) -- ($0.5*(m-6-4.south east)+0.5*(m-6-5.south west)$);
\draw[-] ($0.5*(m-4-1.south west)+0.5*(m-5-1.north west)$) -- ($0.5*(m-4-6.south east)+0.5*(m-5-6.north east)$);
\end{tikzpicture}
\end{minipage}
\end{center}
\caption{A tripartite tournament $D$ and its adjacency matrix $A$ where $V_1 =\{v_1\}$, $V_2 =\{v_2,v_3,v_4\}$, and $V_3=\{v_5, v_6\}$ are the partite sets of $D$.}
\label{fig:multipartitematrix}
\end{figure}
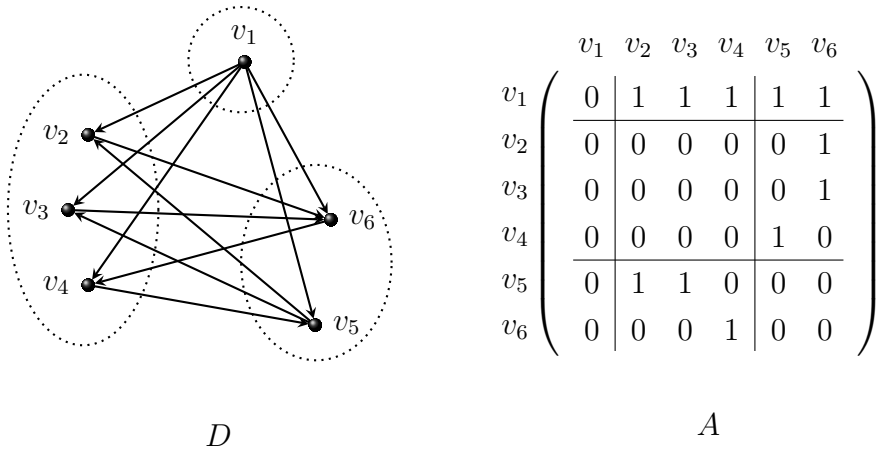

As we observed above, they actually computed the competition period of a Boolean block matrix $A$ with blocks $A_{ij}$ for $1 \le i,j \le k$ such that $A_{ii}$ is a zero matrix and $A_{ij}+A_{ji}^T$ is a matrix with all elements $1$ but not both corresponding elements of $A_{ij}$ and $A_{ji}^T$ equal to $1$ for $i \neq j$.
In this paper, we compute the limit of the matrix sequence $\{A^m(A^T)^m\}_{m=1}^{\infty}$, if it exists, as a natural extension of the result obtained by Jung~{\em et al.}\cite{jung2022competition}.
To this end, we investigate the limit of $\{C^m(D)\}_{m=1}^{\infty}$ for the digraph $D$  of $A$, which is a multipartite tournament.

The greatest common divisor of the lengths of the closed directed walks of a strongly connected nontrivial digraph $D$ is called the {\it index of imprimitivity} of $D$ and denoted by $\kappa(D)$.
If $\kappa(D)=1$, then $D$ is said to be {\em primitive}.
If $A$ is the adjacency matrix of a primitive digraph $D$, then there exists a positive integer $M$ such that $A^m$ becomes a matrix of all $1$s for any integer $m \ge M$.
We call the smallest $M$ the \textit{exponent} of $D$ and denote it by $\exp(D)$.
The vertex set of $D$ can be partitioned into $\kappa(D)$ nonempty subsets $U_1, U_2, \ldots, U_{\kappa(D)}$ where each arc of $D$ goes out from $U_i$ and enters $U_{i+1}$ (identify $U_{\kappa(D)+1}$ with $U_1$) for some $i \in \{1 , \ldots, \kappa(D)\}$ (see \cite{brualdi1991combinatorial}).
We call the sets $U_1, U_2, \ldots, U_{\kappa(D)}$ the {\it sets of imprimitivity} of $D$.

Suppose that $D$ is a weakly connected digraph.
Then the strong components of $D$ can be arranged as $Q_1,\ldots, Q_s$ so that there is no arc going from $Q_i$ to $Q_j$ whenever $i > j$.
We call the strong components arranged in such a way
\textit{ordered strong components} of $D$.
For convenience, we call $Q_s$ the {\it last strong component} of $D$.
We denote the sets of imprimitivity of $Q_i$ by
\[
U_1^{(i)}, U_2^{(i)}, \ldots, U_{\kappa(Q_i)}^{(i)}
\]
for each integer $i=1,\ldots,s$.

For the digraph $D$ given in Figure~\ref{fig:multipartitematrix},
the subdigraphs $Q_1$ and $Q_2$ induced by $V_1$ and $V_2 \cup V_3$, respectively, form the ordered strong components of $D$.
It is easy to check that
$\kappa(Q_2)=4$, $U_{1}^{(2)}=\{v_2,v_3\}$,
$U_{2}^{(2)}=\{v_6\}$,
$U_{3}^{(2)}=\{v_4\}$,
and $U_{4}^{(2)}=\{v_5\}$.
Further,
\[
\begin{tikzpicture}
\matrix [matrix of math nodes,left delimiter=(,right delimiter=),row sep=0.1cm,column sep=0.1cm] (m) {
       1 & 1 & 1 & 1 & 1 & 1 \\
  1 & 1 & 1 & 0 & 0 & 0 \\
  1 & 1 & 1 & 0 & 0 & 0 \\
  1 & 0 & 0 & 1 & 0 & 0 \\
  1 & 0 & 0 & 0 & 1 & 0 \\
  1 & 0 & 0 & 0 & 0 & 1\\ };

\draw[-] ($0.5*(m-1-1.north east)+0.5*(m-1-2.north west)$) -- ($0.5*(m-6-1.south east)+0.5*(m-6-2.south west)$);
\draw[-] ($0.5*(m-2-3.north east)+0.5*(m-2-4.north west)$) -- ($0.5*(m-4-3.south east)+0.5*(m-4-4.south west)$);
\draw[-] ($0.5*(m-4-4.north east)+0.5*(m-4-5.north west)$) -- ($0.5*(m-5-4.south east)+0.5*(m-5-5.south west)$);
\draw[-] ($0.5*(m-5-5.north east)+0.5*(m-5-6.north west)$) -- ($0.5*(m-6-5.south east)+0.5*(m-6-6.south west)$);

\draw[-] ($0.5*(m-1-1.south west)+0.5*(m-2-1.north west)$) -- ($0.5*(m-1-6.south east)+0.5*(m-2-6.north east)$);
\draw[-] ($0.5*(m-3-2.south west)+0.5*(m-4-2.north west)$) -- ($0.5*(m-3-4.south east)+0.5*(m-4-4.north east)$);
\draw[-] ($0.5*(m-4-4.south west)+0.5*(m-5-4.north west)$) -- ($0.5*(m-4-5.south east)+0.5*(m-5-5.north east)$);

\draw[-] ($0.5*(m-5-5.south west)+0.5*(m-6-5.north west)$) -- ($0.5*(m-5-6.south east)+0.5*(m-6-6.north east)$);
\draw (-3.5,0) node{$A^m (A^T)^m =$};
\end{tikzpicture}
\]
for any positive integer $m$ where $A$ is the adjacency matrix of $D$.
Therefore the above matrix is the  limit of the matrix sequence $\{A^m(A^T)^m\}_{m=1}^{\infty}$.
We note that it corresponds to $M_2$ in Figure~\ref{fig:graphs} where $J^{(2)}$ and $J^{(3)}$ do not exist.
This is not a coincidence as shown by Corollary~\ref{cor:last}.

Throughout this paper, we take a graph theoretical approach to derive the following result whose matrix version is Corollary~\ref{cor:last}.

\begin{Thm}\label{thm:last}
Suppose that a $k$-partite tournament $D$ with $k$-partition $(V_1,V_2,\ldots,V_k)$ has ordered strong components $Q_1, Q_2, \ldots, Q_s$ for some integers $k \ge 2$ and $s \ge 1$.
Then, if $Q_s$ is nontrivial, the graph sequence $\{C^m(D)\}_{m=1}^{\infty}$ converges to a graph
\[
G \cong \begin{cases}
          K_{|V(D)|}, & \mbox{if } \kappa(Q_s)=1; \\
          G_1, & \mbox{if } \kappa(Q_s)=2; \\
          G_3, & \mbox{if } \kappa(Q_s)=3; \\
          G_2, & \mbox{if } \kappa(Q_s)=4,
        \end{cases}
\]
 where $G_1$, $G_2$, and $G_3$ are the graphs given in Figure~\ref{fig:graphs}.
\end{Thm}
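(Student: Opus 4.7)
The plan is to track, for each vertex $v \in V(D)$ and large $m$, the intersection of the set of $m$-step preys of $v$ with $V(Q_s)$, and then read off adjacencies in $C^m(D)$ from common preys. The first step is to observe that, when $Q_s$ is nontrivial, $Q_s$ must be the unique terminal strong component of $D$: any other terminal component would contain a vertex lying in a partite set different from some partite set met by $Q_s$ (which intersects at least two partite sets by nontriviality), forcing an arc to or from $Q_s$ and contradicting terminality. Hence every vertex of $D$ reaches $Q_s$ by a directed walk.

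Let $d := \kappa(Q_s)$ with sets of imprimitivity $U_1^{(s)},\ldots,U_d^{(s)}$. Using that the $d$-th power of $Q_s$ decomposes into $d$ primitive digraphs on the $U_j^{(s)}$, I would show there exists $M$ such that for all $m \ge M$ and all $v \in V(D)$, the set of $m$-step preys of $v$ in $V(Q_s)$ is exactly one $U_{j}^{(s)}$, with $j \equiv \phi(v)+m \pmod{d}$ for a phase $\phi(v)$ depending only on $v$ (and equal to $V(Q_s)$ when $d=1$). Well-definedness of $\phi(v)$ rests on the fact that all walks from $v$ to any fixed vertex of $Q_s$ have the same length modulo $d$; otherwise one could construct a closed walk in $Q_s$ whose length is not divisible by $d$, contradicting the definition of $\kappa(Q_s)$.

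From this structure, $u$ and $v$ share an $m$-step prey in $V(Q_s)$ for large $m$ if and only if $\phi(u) \equiv \phi(v) \pmod{d}$, a condition independent of $m$. Common preys lying outside $V(Q_s)$ must sit in some earlier nontrivial strong component $Q_i$; applying the same phase analysis to each such $Q_i$ shows that these contributions stabilize as well, and moreover are already produced via common preys in $V(Q_s)$, since both endpoints of such a pair reach $Q_s$ through $Q_i$ with compatible phases. Therefore the edge set of $C^m(D)$ is constant for $m \ge M$, proving convergence. The limit is then identified by a case analysis on $d$: when $d=1$ every pair has a common prey in $V(Q_s)$, yielding $K_{|V(D)|}$; when $d \in \{2,3,4\}$ the partition of $V(D)$ into $d$ phase classes gives $d$ disjoint cliques, which, after checking how the cliques attach to vertices outside $V(Q_s)$, match $G_1$, $G_3$, $G_2$ of Figure~\ref{fig:graphs}.

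The main obstacle is the bookkeeping around the phase $\phi$ — specifically, showing that for large $m$ the prey set in $V(Q_s)$ is a single $U_j^{(s)}$ rather than a union, which requires $M$ to exceed both the exponents of the primitive blocks of $Q_s$ iterated $d$ times and the length of the longest shortest path from any vertex of $D$ to $V(Q_s)$. A secondary difficulty is matching the resulting phase partition exactly to the graphs $G_1,G_2,G_3$ of Figure~\ref{fig:graphs}, which requires tracking how vertices in earlier (possibly nontrivial) strong components distribute among the $d$ phase classes.
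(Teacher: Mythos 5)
Your argument hinges on the claim that every vertex $v$ of $D$ has a well-defined phase $\phi(v)$ modulo $d=\kappa(Q_s)$, i.e., that for all large $m$ the set of $m$-step preys of $v$ inside $V(Q_s)$ is a single set of imprimitivity $U_j^{(s)}$. This is true for $v \in V(Q_s)$, but it fails for vertices outside $Q_s$, and the justification you give does not apply there: two walks from $v$ to a fixed $w \in V(Q_s)$ of different lengths cannot be concatenated into a closed walk of $Q_s$, because there is no walk back from $w$ to $v$ (the condensation is acyclic), so no contradiction with the definition of $\kappa(Q_s)$ arises. Concretely, if $\kappa(Q_s)=3$ and $v$ lies in a partite set that is not partite-related to any partite set of $Q_s$ (possible as soon as $k\ge 4$), then $v \to V(Q_s)$ and the $1$-step prey set of $v$ already meets all three sets $U_1^{(s)},U_2^{(s)},U_3^{(s)}$; likewise a trivial component $Q_{s-1}=\{v\}$ with $v\in V_2$ has arcs to all of $U_1^{(s)}\cup U_3^{(s)}$. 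This is not a repairable bookkeeping issue: it is precisely why the limit graphs are not disjoint unions of $d$ cliques but have cliques joined to several others (e.g.\ $K^{(1)}$ in $G_3$ is joined to every other clique), so the ``partition of $V(D)$ into $d$ phase classes giving $d$ disjoint cliques'' that your mechanism predicts contradicts the very statement you are proving.

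The paper takes a different and, at this point, necessary route: convergence is obtained for free from the monotonicity of edges of $C^m(D)$ for sink-free digraphs (Proposition~\ref{prop:adjacent}), which reduces every adjacency claim to exhibiting a single common prey at a single step length; the substantive work is then a case analysis of how vertices in earlier components reach the various $U_j^{(s)}$ (Proposition~\ref{prop:not partite-related}, Lemma~\ref{lem:clique}, Lemma~\ref{claim:a}), including a delicate configuration (the ``unusual'' digraph) in which certain nonedges genuinely survive in the limit. To salvage your approach you would have to replace the single residue $\phi(v)$ by the full set of residues modulo $d$ realizable by walks from $v$ into $Q_s$, prove that this set stabilizes, and then determine it in each case --- which amounts to redoing essentially the same case analysis you are currently skipping.
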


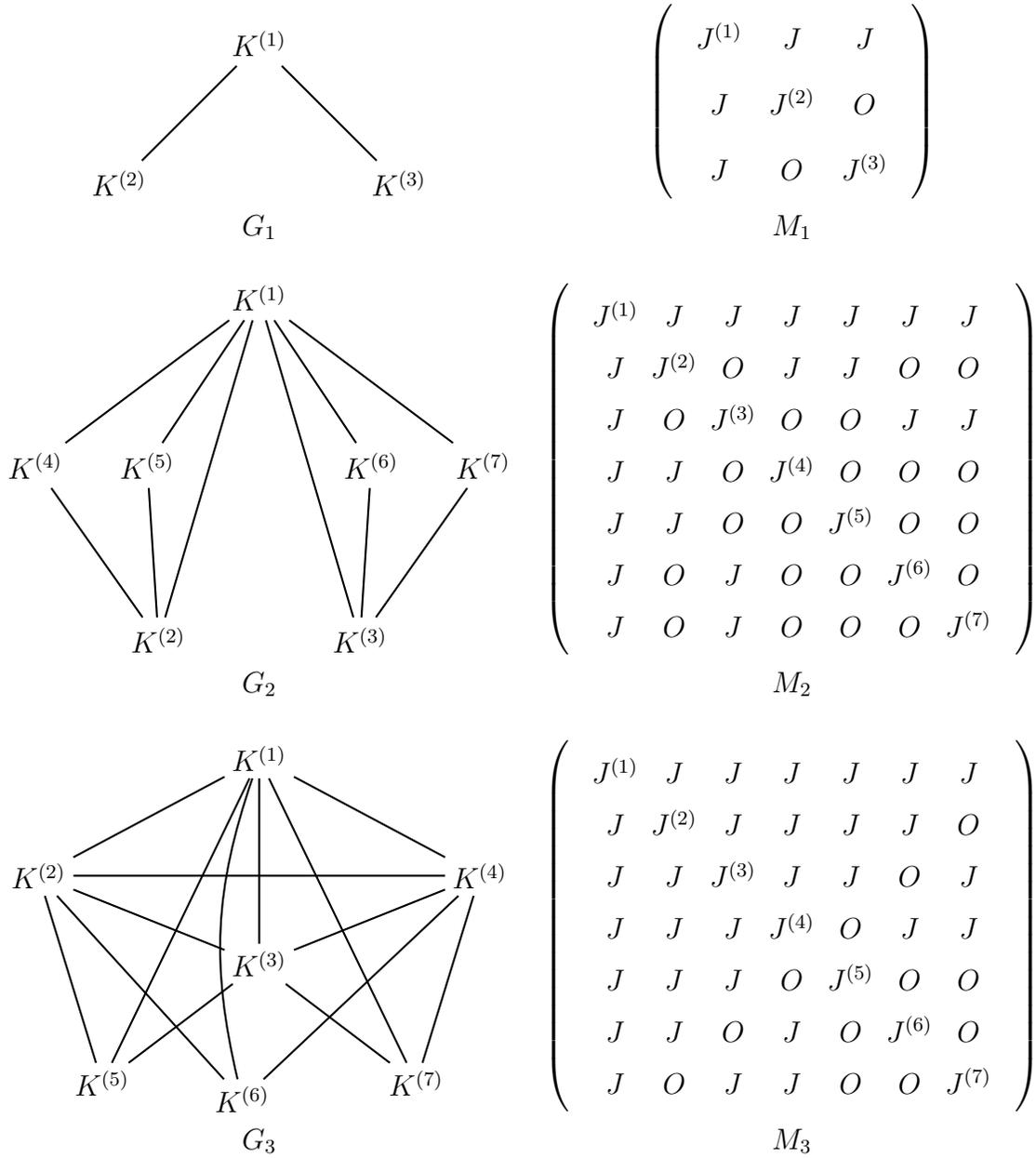
\begin{figure}
  \begin{center}
  \begin{tabular}{cc}
\begin{tikzpicture}[auto,thick,scale=1]
    \tikzstyle{player}=[minimum size=5pt,inner sep=1pt,outer sep=0pt,circle,draw=black]
    \tikzstyle{player1}=[minimum size=5pt,inner sep=1pt,outer sep=3pt,rectangle,draw=white]
    \tikzstyle{source}=[minimum size=5pt,inner sep=0pt,outer sep=0pt,ball color=black, circle]
    \tikzstyle{arc}=[minimum size=5pt,inner sep=1pt,outer sep=1pt, font=\footnotesize]
    \path (90:2cm)     node [player1]  (a) {\phantom{d}$K^{(1)}$\phantom{d}};
    \path (0:2cm)       node [player1] (b){\phantom{dd}$K^{(3)}$\phantom{dd}};
    \path (180:2cm)       node [player1] (c){\phantom{dd}$K^{(2)}$\phantom{dd}};
    \draw[black,thick,-] (a) -- ++(b);
    \draw[black,thick,-] (a) -- ++(c);

    \end{tikzpicture}
    &
\begin{tikzpicture}
\matrix [matrix of math nodes,left delimiter=(,right delimiter=),row sep=0.3cm,column sep=0.1cm] (m) {
      J^{(1)} & J &  J \\
      J & J^{(2)} &  O  \\
      J & O   &J^{(3)}\\ };
\end{tikzpicture}
\\
    $G_1$ & $M_1$\\
    &\\

    \begin{tikzpicture}[auto,thick,scale=0.8]
    \tikzstyle{player}=[minimum size=5pt,inner sep=1pt,outer sep=0pt,circle,draw=black]
    \tikzstyle{player1}=[minimum size=5pt,inner sep=1pt,outer sep=3pt,rectangle,draw=white]
    \tikzstyle{source}=[minimum size=5pt,inner sep=0pt,outer sep=0pt,ball color=black, circle]
    \tikzstyle{arc}=[minimum size=5pt,inner sep=1pt,outer sep=1pt, font=\footnotesize]
    \path (90:3cm)     node [player1]  (a) {$K^{(1)}$};
    \path (300:3.6cm)       node [player1] (b){$K^{(3)}$};
    \path (240:3.6cm)       node [player1] (c){$K^{(2)}$};
    \path (0:2cm)       node [player1] (d){$K^{(6)}$};
    \path (0:4cm)       node [player1] (e){$K^{(7)}$};
    \path (180:2cm)       node [player1] (f){$K^{(5)}$};
    \path (180:4cm)       node [player1] (g){$K^{(4)}$};

    \draw[black,thick,-] (a) -- ++(b);
    \draw[black,thick,-] (a) -- ++(c);
    \draw[black,thick,-] (b) -- ++(d);
    \draw[black,thick,-] (b) -- ++(e);
    \draw[black,thick,-] (c) -- ++(f);
    \draw[black,thick,-] (c) -- ++(g);
    \draw[black,thick,-] (a) -- ++(f);
    \draw[black,thick,-] (a) -- ++(g);
    \draw[black,thick,-] (a) -- ++(d);
    \draw[black,thick,-] (a) -- ++(e);

    \end{tikzpicture}
    & \begin{tikzpicture}
\matrix [matrix of math nodes,left delimiter=(,right delimiter=),row sep=0.1cm,column sep=-0.1cm] (m) {
      J^{(1)} & J & J  & J & J  & J & J  \\
      J & J^{(2)} & O  & J & J  & O & O  \\
      J & O   &J^{(3)} & O & O  & J & J  \\
      J & J & O & J^{(4)} & O & O & O \\
      J & J & O &O   &J^{(5)}& O & O  \\
      J &  O & J  &  O   & O  & J^{(6)} & O  \\
      J & O & J & O & O & O & J^{(7)}\\ };
\end{tikzpicture}\\
$G_2$ & $M_2$\\
    &\\

    \begin{tikzpicture}[auto,thick,scale=0.7]
    \tikzstyle{player}=[minimum size=5pt,inner sep=1pt,outer sep=0pt,circle,draw=black]
    \tikzstyle{player1}=[minimum size=5pt,inner sep=0pt,outer sep=3pt,rectangle,draw=white]
    \tikzstyle{source}=[minimum size=5pt,inner sep=0pt,outer sep=0pt,ball color=black, circle]
    \tikzstyle{arc}=[minimum size=5pt,inner sep=1pt,outer sep=1pt, font=\footnotesize]
    \path (30:5.2cm)  node [player1]  (a) {$K^{(4)}$};
    \path (90:0.8cm)     node [player1]  (b) {$K^{(3)}$};
    \path (150:5.2cm)  node [player1]  (c) {$K^{(2)}$};
    \path (333:3.6cm)   node [player1]  (d) {$K^{(7)}$};
    \path (260:2cm)       node [player1]  (e){$K^{(6)}$};
    \path (207:3.6cm)  node [player1]  (f){$K^{(5)}$};
    \path (90:5cm)  node [player1]  (g){$K^{(1)}$};

    \draw[black,thick,-] (a) -- ++(b);
    \draw[black,thick,-] (b) -- ++(c);
   \draw[black,thick,-] (a) -- ++(c);
    \draw[black,thick,-] (a) -- ++(e);
    \draw[black,thick,-] (c) -- ++(e);
    \draw[black,thick,-] (b) -- ++(f);
    \draw[black,thick,-] (b) -- ++(d);
    \draw[black,thick,-] (c) -- ++(f);
    \draw[black,thick,-] (a) -- ++(d);
    \draw[black,thick,-] (g) -- ++(a);
    \draw[black,thick,-] (g) -- ++(b);
    \draw[black,thick,-] (g) -- ++(c);
    \draw[black,thick,-] (g) -- ++(d);
   \path (g) edge [black, bend right=16,thick,-] (e);
    \draw[black,thick,-] (g) -- ++(f);
    \end{tikzpicture}
         &
        \begin{tikzpicture}
\matrix [matrix of math nodes,left delimiter=(,right delimiter=),row sep=0.1cm,column sep=-0.1cm] (m) {
      J^{(1)} & J &   J  & J & J   & J &J \\
      J & J^{(2)} &   J  & J & J   & J & O  \\
      J & J   &J^{(3)} & J & J   & O & J  \\
      J & J & J & J^{(4)} & O & J & J \\
      J & J    & J    &   O   &J^{(5)}& O & O  \\
      J &  J    & O    &  J   & O  & J^{(6)} &O  \\
      J & O & J & J & O & O & J^{(7)} \\ };
\end{tikzpicture}\\
        $G_3$ & $M_3$

%
\end{tabular}
\end{center}
\caption{The graphs $G_1$, $G_2$, and $G_3$ in Theorem~\ref{thm:last}.
        In each graph, $K^{(i)}$ either does not exist or stands for a clique.
       The line between two cliques $K^{(i)}$ and $K^{(j)}$ indicates that $K^{(i)} \cup K^{(j)}$ forms a clique while the absence of a line between $K^{(i)}$ and $K^{(j)}$ means that there is no edge joining a vertex in $K^{(i)}$ and a vertex in $K^{(j)}$.
       The matrix $M_i= A(G_i)+I$ where $A(G_i)$ is the adjacency matrix of $G_i$ for each $i=1,2,3$; $I$, $J$, and $O$ are an identity matrix, a matrix of all $1$s, and a zero matrix, respectively, of an appropriate order;
       $J^{(s)}$ either does not exist or is a square matrix of all $1$s.
}\label{fig:graphs}
\end{figure}

By Proposition~\ref{Prop:nosinks}, $Q_s$ being nontrivial is equivalent to $D$ having no sinks.
Therefore $Q_s$ being nontrivial is equivalent to the adjacency matrix of $D$ having each row sum nonzero.
By \eqref{eq:iff}, Theorem~\ref{thm:last} may be restated as follows.
\begin{Cor}[Matrix version]\label{cor:last}
For an integer $k \ge 2$, let $A$ be a  block matrix with blocks $A_{ij}$ for $1 \le i,j \le k$ such that $A_{ii}$ is a zero matrix and $A_{ij}+A_{ji}^T$ is a matrix of all $1$s but not both corresponding elements of $A_{ij}$ and $A_{ji}^T$ equal to $1$ for $i \neq j$.
If each row sum of $A$ is nonzero, then the limit of the matrix sequence $\{A^m(A^T)^m\}_{m=1}^{\infty}$ exists and equals one of the matrices given in Figure~\ref{fig:graphs} after a simultaneous permutation of rows and columns.
\end{Cor}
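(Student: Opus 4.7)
The plan is to read Corollary~\ref{cor:last} as a translation of Theorem~\ref{thm:last} through the digraph/matrix dictionary already set up in the introduction. The two non-routine ingredients are confirming that the diagonal of $A^m(A^T)^m$ behaves correctly under the hypothesis of nonzero row sums, and checking that the block patterns $M_1,M_2,M_3$ in Figure~\ref{fig:graphs} really do record $A(G_i)+I$ after an appropriate simultaneous permutation of rows and columns.

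First I would verify that the block conditions on $A$ force $A$ to be the adjacency matrix of a $k$-partite tournament $D$: taking the vertex set of the $i$-th diagonal block of $A$ to be a set $V_i$, the hypothesis $A_{ii}=O$ says $V_i$ is an independent set, while the condition on $A_{ij}+A_{ji}^T$ says that between any $x \in V_i$ and $y \in V_j$ with $i \neq j$ exactly one of $(x,y)$, $(y,x)$ is an arc. Thus $D$ is a $k$-partite tournament with $k$-partition $(V_1,\dots,V_k)$. The row sums of $A$ equal the out-degrees of the vertices of $D$, so every row sum of $A$ being nonzero is equivalent to $D$ having no sinks, which by Proposition~\ref{Prop:nosinks} is equivalent to the last strong component $Q_s$ of $D$ being nontrivial.

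Next I would invoke Theorem~\ref{thm:last} to obtain a graph $G$, isomorphic to one of $K_{|V(D)|},G_1,G_2,G_3$, such that $C^m(D)=G$ for all sufficiently large $m$. By the equivalence~\eqref{eq:iff}, eventual constancy of $\{C^m(D)\}$ is equivalent to eventual constancy of $\{A^m(A^T)^m\}$, so the matrix sequence converges; since $A^*_m$ is the adjacency matrix of $C^m(D)$, its limit is $A(G)$. To account for the diagonal I would note that since $D$ has no sinks, extending out-arcs step by step produces a directed walk of arbitrary length from any vertex, so every vertex of $D$ has an $m$-step prey for every $m \ge 1$. Hence $(A^m(A^T)^m)_{ii}=1$ for all $i$ and all $m$, giving $A^m(A^T)^m=A^*_m+I$ throughout the sequence and, in the limit, $\lim_{m\to\infty}A^m(A^T)^m=A(G)+I$.

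Finally I would match $A(G)+I$ to one of $M_1,M_2,M_3$. The cliques $K^{(i)}$ that make up each $G_i$ partition $V(D)$, with an empty $K^{(i)}$ corresponding to the block $J^{(i)}$ being absent, so reordering the vertices of $D$ so that those in a common $K^{(i)}$ form a contiguous block converts $A(G)+I$ into block form with diagonal blocks $J^{(i)}$, off-diagonal blocks $J$ for each pair of cliques joined by an edge in $G_i$, and $O$ for each pair of cliques not joined; reading off the edge patterns in Figure~\ref{fig:graphs} yields exactly $M_1,M_2,M_3$, and the case $\kappa(Q_s)=1$ collapses $M_1$ to the single block $J$. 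The main obstacle is the bookkeeping in this last step: the clique partition $\{K^{(i)}\}$ is in general quite different from the partite-set partition $\{V_i\}$ that was used to define the original block form of $A$, so one has to be careful that the ``simultaneous permutation of rows and columns'' asserted by the corollary is precisely the one that groups vertices by $K^{(i)}$ rather than by $V_i$, and that it sends $A(G)+I$ to the exact pattern of $J^{(i)}$, $J$, and $O$ blocks drawn in Figure~\ref{fig:graphs}.
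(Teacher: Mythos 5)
Your proposal is correct and follows essentially the same route as the paper, which obtains Corollary~\ref{cor:last} directly from Theorem~\ref{thm:last} via Proposition~\ref{Prop:nosinks} (nonzero row sums $\Leftrightarrow$ no sinks $\Leftrightarrow$ nontrivial $Q_s$) and the equivalence~\eqref{eq:iff}. You in fact spell out two details the paper leaves implicit --- that the no-sink hypothesis forces every diagonal entry of $A^m(A^T)^m$ to be $1$, so the limit is $A(G)+I$ rather than $A(G)$, and that the permutation grouping vertices by the cliques $K^{(i)}$ (not by the partite sets $V_i$) produces the block patterns $M_1,M_2,M_3$ --- which is a welcome clarification rather than a deviation.
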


The rest of this paper is devoted to proving Theorem~\ref{thm:last}.
In Section~\ref{sec:2}, we justify that it is sufficient to consider the cases
$\kappa(Q_s)=1,2,3,4$ where $Q_s$ is the last strong component of the given multipartite tournament.
Section~\ref{sec:3} takes care of the cases $\kappa(Q_s)=1,2,4$ while Section~\ref{sec:4} takes care of the case $\kappa(Q_s)=3$.
Finally, Section~\ref{sec:5} winds up the proof of Theorem~\ref{thm:last}.

\section{Preliminaries}\label{sec:2}
\begin{Prop}\label{Prop:nosinks}
  Let $D$ be a multipartite tournament with the last strong component $Q_s$ for some positive integer $s$.
  Then $D$ has no sinks if and only if $Q_s$ is nontrivial.
\end{Prop}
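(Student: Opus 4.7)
The plan is to prove the two directions separately via contrapositive arguments, using the defining property of ordered strong components (no arcs from $Q_j$ to $Q_i$ when $j > i$) together with the multipartite tournament structure.

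For the easy direction, I would assume $Q_s$ is trivial, say $Q_s = \{v\}$, and show $v$ is a sink. Since $Q_s$ is the last strong component in the ordering, the only arcs that could leave $v$ would have to go into some $Q_i$ with $i < s$, which is forbidden by the ordering condition. The triviality of $Q_s$ means $v$ has no loops (the underlying graph is simple), so no arcs leave $v$ at all. Therefore $v$ is a sink, establishing the contrapositive of ``$D$ has no sinks $\Rightarrow Q_s$ is nontrivial.''

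For the other direction, I would assume $Q_s$ is nontrivial and suppose for contradiction that $v$ is a sink of $D$. Then $\{v\}$ is itself a trivial strong component, so it must equal some $Q_i$ with $i < s$ (distinct from the nontrivial $Q_s$). The key observation is that a nontrivial strong component of a multipartite tournament cannot be contained in a single partite set, because vertices within the same partite set have no arcs between them and so could not form a strongly connected subdigraph of more than one vertex. Hence, writing $v \in V_\ell$, there exists some $u \in V(Q_s)$ with $u \notin V_\ell$. Since $u$ and $v$ lie in different partite sets, the multipartite tournament structure forces an arc between them; as $v$ is a sink, the arc must be $u \to v$. This is an arc from $Q_s$ to $Q_i$ with $s > i$, contradicting the ordering property of the ordered strong components.

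The main (minor) obstacle is the argument that a nontrivial strong component of a multipartite tournament must meet at least two partite sets. This is a short observation but is the only place where the multipartite hypothesis is actually used; once it is in hand, both directions follow quickly from the definitions of \emph{sink} and \emph{ordered strong components}. No deeper machinery or appeal to Theorem~\ref{thm:main} is needed.
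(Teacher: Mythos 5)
Your proposal is correct and rests on the same key observation as the paper's proof, namely that a nontrivial strong component of a multipartite tournament must meet at least two partite sets, so that the ordering of the strong components forces an arc out of any would-be sink. The only difference is presentational: the paper argues the converse directly by exhibiting an out-neighbor of an arbitrary vertex in $X \cap V(Q_s)$, while you derive a contradiction from a backward arc into $Q_i$ with $i<s$; both are fine.
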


\begin{proof}
  If $Q_s$ is trivial, then the vertex in $Q_s$ is a sink in $D$.
  To show the converse, suppose that $Q_s$ is a nontrivial strong component of $D$ and take a vertex $v \in V(D)$.
  Then there are at least two partite sets of $D$ that intersect with $V(Q_s)$.
   Therefore there exists a partite set $X$ such that $X \cap V(Q_s) \neq \emptyset$ and $v$ does not belong to $X$.
  Then $v$ has an out-neighbor in $X \cap V(Q_s)$ since $D$ is a multipartite tournament.
  Hence $D$ has no sinks.
\end{proof}

The following result given by Eoh~{\em et al.} \cite{eoh2020m} was originally for a bipartite tournament.
Yet, their proof is still valid for a general digraph and so the statement may be restated as follows.

\begin{Prop}[\cite{eoh2020m}]\label{prop:adjacent}
 Let $D$ be a digraph without sinks.
 If two vertices are adjacent in $C^M(D)$ for a positive integer $M$, then they are also adjacent in $C^m(D)$ for any positive integer $m \ge M$.
\end{Prop}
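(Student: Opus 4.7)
The plan is to exploit the no-sinks hypothesis in the most direct way: it lets us extend any directed walk by one step (and hence by arbitrarily many steps) by repeatedly following an out-neighbor. So if $u$ and $v$ already have a common $M$-step prey, we can push that prey forward along a walk to obtain a common $m$-step prey for any $m \ge M$.

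More concretely, I would begin by fixing two vertices $u, v$ that are adjacent in $C^M(D)$. By definition, there exists a common $M$-step prey $w$; that is, there are directed walks $W_u$ from $u$ to $w$ and $W_v$ from $v$ to $w$, each of length $M$. For an arbitrary integer $m \ge M$, set $k := m - M \ge 0$. Since $D$ has no sinks, every vertex of $D$ has at least one out-neighbor, so starting from $w = w_0$ I can iteratively choose an out-neighbor to form a directed walk
\[
w = w_0 \to w_1 \to \cdots \to w_k
\]
of length $k$ in $D$.

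The final step is to concatenate. The walk $W_u$ followed by $w_0 \to w_1 \to \cdots \to w_k$ is a directed walk from $u$ to $w_k$ of length $M + k = m$, and similarly $W_v$ followed by the same tail is a directed walk from $v$ to $w_k$ of length $m$. Hence $w_k$ is a common $m$-step prey of $u$ and $v$, so $u$ and $v$ are adjacent in $C^m(D)$, as required.

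There is really no serious obstacle here; the only point to be careful about is that, because $D$ has no sinks, the ``extending'' walk $w_0 \to w_1 \to \cdots \to w_k$ exists even when $w$ itself does not lie on a cycle, and the argument works uniformly for all $m \ge M$ because the same tail walk serves both $u$ and $v$. The hypothesis that $D$ has no sinks is essential: without it, $w$ could be a sink and no such extension would exist.
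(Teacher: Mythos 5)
Your proof is correct, and it is the standard argument: extend the common $M$-step prey $w$ by a tail walk of length $m-M$ (which exists because $D$ has no sinks) and append the same tail to both walks. The paper itself does not reprove this statement but cites it from Eoh~\emph{et al.}, noting their proof carries over verbatim to general digraphs; your argument is exactly that proof.
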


The following is an immediate consequence of the above proposition.

\begin{Cor}\label{cor:nosinks}
  Let $D$ be a digraph without sinks.
  Then the graph sequence $\{C^m(D)\}_{m=1}^{\infty}$ converges.
\end{Cor}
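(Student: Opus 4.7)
The plan is to exploit the monotonicity of the edge sets supplied by Proposition~\ref{prop:adjacent}. Write $E_m := E(C^m(D))$. Since $D$ has no sinks, Proposition~\ref{prop:adjacent} says that any edge present in $C^M(D)$ is also present in $C^m(D)$ for every $m \ge M$. In particular, taking $M$ to run over all positive integers, we obtain the monotone chain $E_1 \subseteq E_2 \subseteq E_3 \subseteq \cdots$.

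All of the graphs $C^m(D)$ share the common vertex set $V(D)$, which is finite, so each $E_m$ is a subset of the fixed finite set $\binom{V(D)}{2}$. Consequently $|E_m|$ is a nondecreasing sequence of nonnegative integers bounded above by $\binom{|V(D)|}{2}$, hence eventually constant: there is a positive integer $M$ with $|E_m| = |E_M|$ for every $m \ge M$. Since the chain is increasing, $E_M \subseteq E_m$ whenever $m \ge M$, and the equality of cardinalities then forces $E_m = E_M$. Because all the graphs $C^m(D)$ have the same vertex set, we conclude $C^m(D) = C^M(D)$ for all $m \ge M$, which is precisely what it means for $\{C^m(D)\}_{m=1}^{\infty}$ to converge.

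There is no genuine obstacle here: once Proposition~\ref{prop:adjacent} is in hand, the corollary reduces to the standard fact that an increasing chain of subsets of a finite set must stabilize. The only point worth flagging is that convergence of a graph sequence is to be read as eventual constancy, in line with the matrix-sequence viewpoint encoded in~\eqref{eq:iff}; under that reading the short argument above is complete.
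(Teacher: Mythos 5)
Your proof is correct and is exactly the argument the paper has in mind: the paper states this corollary as an immediate consequence of Proposition~\ref{prop:adjacent}, relying on the same monotonicity-plus-finiteness reasoning you spell out. No issues.
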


Proposition~\ref{Prop:nosinks} and Corollary~\ref{cor:nosinks} tell us that,
for a multipartite tournament $D$ with the nontrivial last strong component, the graph sequence $\{C^m(D)\}_{m=1}^{\infty}$ converges.
In this vein, throughout this paper, we seek for the limit of $\{C^m(D)\}_{m=1}^{\infty}$
for a multipartite tournament $D$ with the nontrivial last strong component.
We note that $D$ has no sinks by Proposition~\ref{Prop:nosinks}.
Thus,   by Proposition~\ref{prop:adjacent},
\begin{itemize}
  \item[($\ast$)] in order to prove that two vertices $u$ and $v$ are adjacent in the limit of $\{C^m(D)\}_{m=1}^{\infty}$, it is sufficient to show that $u$ and $v$ have an $m$-step common prey for some positive integer $m$.
\end{itemize}

Next, we observe that the limit of $\{C^m(D)\}_{m=1}^{\infty}$ depends on the index of imprimitivity of the last strong component of a multipartite tournament $D$.

\begin{Prop}\label{prop:index}
  Let $D$ be a strongly connected $k$-partite tournament for an integer $k \ge 2$.
  Then the index $\kappa(D)$ of imprimitivity of $D$ is as follows:
  \[
  \kappa(D) = \begin{cases}
                2 \mbox{ or } 4, & \mbox{if } k=2; \\
                1 \mbox{ or } 3, & \mbox{if } k=3; \\
                1, & \mbox{if } k \ge 4.
              \end{cases}
  \]
\end{Prop}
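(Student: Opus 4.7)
The plan is to exploit the set-of-imprimitivity decomposition $U_1,\ldots,U_{\kappa}$ of $D$ (write $\kappa:=\kappa(D)$) and derive arithmetic constraints on how it interacts with the $k$-partition; these will squeeze $\kappa$ into the claimed small set. The starting observation is that when $\kappa\ge 2$, every arc of $D$ goes from some $U_i$ to $U_{i+1}$, so the subdigraph induced by $U_i$ is arcless; since $D$ is $k$-partite, any two vertices in distinct partite sets are joined by an arc, forcing each $U_i$ to lie inside a single partite set. I therefore define a map $f:\{1,\ldots,\kappa\}\to\{1,\ldots,k\}$ by $U_i\subseteq V_{f(i)}$.

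The first step is to collect three properties of $f$ (assuming $\kappa\ge 2$): \emph{(i)} $f(i)\neq f(i+1)$ cyclically, because the arcs $U_i\to U_{i+1}$ go between distinct partite sets; \emph{(ii)} $f$ is surjective, because each partite set $V_j=\bigcup_{f(i)=j}U_i$ is nonempty; and \emph{(iii)} $f(a)=f(b)$ whenever $a-b\not\equiv\pm 1\pmod{\kappa}$, since otherwise picking $u\in U_a$ and $v\in U_b$ in different partite sets yields an arc between them, which however must be of the form $U_c\to U_{c+1}$, forcing $\{a,b\}=\{c,c+1\}$ in $\ZZ/\kappa\ZZ$.

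The second step is a case analysis on $\kappa\ge 2$ using \emph{(iii)}. Whenever $\kappa\ge 4$, the difference $2$ is not $\pm 1\pmod\kappa$, so $f(i)=f(i+2)$. If further $\kappa\ge 5$, then also $f(i)=f(i+3)$, and since $\gcd(2,3)=1$ the map $f$ becomes constant, contradicting \emph{(i)}. Hence $\kappa\in\{2,3,4\}$, and a short inspection shows $|f(\{1,\ldots,\kappa\})|=2$ when $\kappa\in\{2,4\}$ ($f$ alternates between two values) and $|f(\{1,2,3\})|=3$ when $\kappa=3$ (a proper $\{1,2\}$-colouring of a $3$-cycle does not exist). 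By \emph{(ii)}, $k$ equals this image size, so $\kappa\ge 2$ already forces $k\in\{2,3\}$.

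Reading off the proposition is then immediate. For $k\ge 4$ we must have $\kappa=1$. For $k=3$ the only admissible $\kappa\ge 2$ is $\kappa=3$, yielding $\kappa\in\{1,3\}$. For $k=2$ the admissible $\kappa\ge 2$ are $\kappa\in\{2,4\}$, and I rule out $\kappa=1$ separately by noting that a bipartite tournament contains only even-length closed walks, so $\kappa$ must be even. The main delicate point is constraint \emph{(iii)}: it turns a single ``missing-arc'' condition into a global arithmetic restriction on $f$, and it is what pins the answer precisely to the stated lists; once \emph{(i)}--\emph{(iii)} are in hand, the remainder is a routine modular check.
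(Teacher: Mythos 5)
Your proof is correct, but it takes a genuinely different route from the paper's. The paper invokes Bondy's theorem that a strongly connected $k$-partite tournament with $k\ge 3$ contains directed cycles of every length $3,\ldots,k$ (which settles $k\ge 4$ and $k=3$ at once via the gcd), and for $k=2$ it explicitly extracts a directed $4$-cycle from a closed spanning walk and combines this with parity. You instead argue entirely from the cyclic partition into sets of imprimitivity: since each $U_i$ induces no arcs while any two vertices in different partite sets are joined by an arc, each $U_i$ lies in one partite set, and your constraints (i)--(iii) on the resulting map $f$ pin down $\kappa\le 4$ and force $k=|{\rm im}\,f|$ to be $2$ for $\kappa\in\{2,4\}$ and $3$ for $\kappa=3$. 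This is self-contained (no appeal to Bondy) and, as a bonus, essentially proves the structural statements the paper records separately in Corollary~\ref{cor:kappa} -- that for $\kappa=2$ the sets of imprimitivity are exactly the partite sets, and for $\kappa=4$ each partite set is the union of two of them -- since these read off directly from your map $f$; the paper's route is shorter given the citation and yields the extra cycle-length information. Two small points you should make explicit: for (i) you need that an arc from $U_i$ to $U_{i+1}$ actually exists for every $i$, which follows because each nonempty $U_i$ consists of vertices whose out-neighbours (which exist by strong connectivity) all lie in $U_{i+1}$; and the well-definedness of $f$ uses $\kappa\ge 2$ so that an arc inside $U_i$ would contradict the disjointness of $U_i$ and $U_{i+1}$. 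Neither is a gap, just a line each worth writing down.
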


\begin{proof}
It is shown in \cite{bondy1976diconnected} that, for an integer $k \ge 3$, each strongly connected $k$-partite tournament contains a directed cycle of length $\ell$ for each $\ell \in  \{3, 4, \ldots, k\}$.
  Therefore, if $k \ge 4$, then $D$ is primitive.
  Further, if $k=3$, then $D$ contains a directed cycle of length $3$ and so $\kappa(D)$ is $1$ or $3$.

  Suppose $k=2$ and let $(V_1,V_2)$ be a bipartition of $D$.
  Since $D$ is strongly connected, there exists a closed directed walk $W:=u_0 \to u_1 \to \cdots \to u_{l-1} \to u_0$ containing all the vertices of $D$ for some positive integer $l$.
  Since $D$ is a bipartite tournament, the length $l$ of $W$ is even and so $l \ge 4$.
  Without loss of generality, we may assume that a vertex on $W$ with an even index belongs to $V_1$ and a vertex on $W$ with an odd index belongs to $V_2$.
  Since $D$ is a bipartite tournament, there is an arc between $u_i$ and $u_{l-i-1}$ for each integer $0 \le i \le l/2 - 1$.
  Then $u_{{l/2 }-1} \to u_{l/2}$.
Since $u_{l-1}\to u_0$, there is the smallest index $i$ among $1, \ldots, l/2 -1$ such that $u_{i} \to u_{l-i-1}$.
  Then $u_{l-i} \to u_{i-1}$ and $C := u_{i-1} \to u_i \to u_{l-i-1} \to u_{l-i} \to u_{i-1}$ is a closed directed walk of length $4$ in $D$.
  Since $D$ has neither loops nor multiple arcs, all the vertices on $C$ are distinct and so $C$ is a directed cycle in $D$.
  Since $D$ is a bipartite tournament, there is no directed cycle of odd length in $D$ and so $\kappa(D)$ is $2$ or $4$.
\end{proof}

The following is an immediate consequence of the above proposition.
\begin{Cor}\label{cor:kappa}
Let $D$ be a strongly connected $k$-partite tournament with $k$-partition $(V_1,V_2,\ldots, V_k)$ for an integer $k \ge 2$.
Then the following are true:
\begin{itemize}
  \item[(i)] if $\kappa(D)=1$, then $k \ge 3$;
  \item[(ii)] if $\kappa(D)=2$, then $k=2$ and the sets of imprimitivity of $D$ are $V_1$ and $V_2$;
  \item[(iii)] if $\kappa(D)=3$, then $k=3$ and the sets of imprimitivity of $D$ are $V_1$, $V_2$, and $V_3$;
  \item[(iv)] if $\kappa(D)=4$, then $k=2$ and there exists a partition $\{X_i, Y_i\}$ of $V_i$ for each $i=1,2$ such that $X_1$, $Y_1$, $X_2$, and $Y_2$ are the sets of imprimitivity of $D$.
\end{itemize}
\end{Cor}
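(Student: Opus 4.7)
My plan is to reduce each of the four parts to Proposition~\ref{prop:index} together with the standard description of the imprimitivity classes as fibers of a walk-length map. For a strongly connected digraph with $\kappa(D)=d$, fixing any root $v_0$ makes the assignment $\phi(u):=\ell \pmod d$ (where $\ell$ is the length of any walk from $v_0$ to $u$) a well-defined map $\phi\colon V(D)\to\ZZ/d\ZZ$; every arc $u\to v$ satisfies $\phi(v)\equiv\phi(u)+1\pmod d$, and the sets of imprimitivity are exactly the fibers $U_i=\phi^{-1}(i)$.

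Part (i) is immediate from Proposition~\ref{prop:index}: since $k=2$ would force $\kappa(D)\in\{2,4\}$, the assumption $\kappa(D)=1$ rules out $k=2$ and gives $k\ge 3$.

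For parts (ii) and (iv), Proposition~\ref{prop:index} first forces $k=2$, after which the bipartite structure takes over: every arc crosses the bipartition, so walks alternate between $V_1$ and $V_2$, and with $v_0\in V_1$ the parity of $\phi(u)$ identifies the side on which $u$ lies. For $d=2$ this says precisely $V_1=U_0$ and $V_2=U_1$, proving (ii). For $d=4$ it yields $V_1\subseteq U_0\cup U_2$ and $V_2\subseteq U_1\cup U_3$; because all four $U_i$ are nonempty and together partition $V(D)$, the inclusions are equalities, so setting $\{X_1,Y_1\}=\{U_0,U_2\}$ and $\{X_2,Y_2\}=\{U_1,U_3\}$ gives the refined partition required by (iv).

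The main obstacle is (iii), where Proposition~\ref{prop:index} gives $k=3$ but parity-of-walk considerations alone cannot tell the three $V_i$'s apart. I propose to show by contradiction that each $V_i$ is contained in a single $U_j$. Suppose not, and pick $u,u'\in V_i$ with $\phi(u)\ne\phi(u')$. For any $w$ in another partite set $V_j$, the arc (in one direction or the other) between $u$ and $w$ forces $\phi(w)\in\ZZ/3\ZZ\setminus\{\phi(u)\}$, and the arc between $u'$ and $w$ forces $\phi(w)\in\ZZ/3\ZZ\setminus\{\phi(u')\}$. Intersecting these two-element subsets of $\ZZ/3\ZZ$ leaves the unique element $c\in\ZZ/3\ZZ\setminus\{\phi(u),\phi(u')\}$. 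Applying this to both partite sets $V_j$ and $V_{j'}$ other than $V_i$ places $V_j\cup V_{j'}\subseteq U_c$, so any arc between a vertex of $V_j$ and a vertex of $V_{j'}$ would lie inside the single class $U_c$, contradicting the fact that arcs go from $U_c$ to $U_{c+1}$. Since the tripartite tournament does have such cross-arcs, the assumption fails; hence each $V_i$ lies in one $U_j$, and counting nonempty classes forces $\{V_1,V_2,V_3\}=\{U_1,U_2,U_3\}$.
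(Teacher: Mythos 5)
Your proof is correct. The paper offers no argument of its own here---it presents Corollary~\ref{cor:kappa} as an ``immediate consequence'' of Proposition~\ref{prop:index}---and your write-up supplies exactly the details the authors leave implicit: Proposition~\ref{prop:index} pins down $k$ in each case, and the walk-length map $\phi\colon V(D)\to\ZZ/\kappa(D)\ZZ$ (whose fibers are the sets of imprimitivity) identifies those sets with the partite sets, including the correct contradiction argument in case (iii) showing each $V_i$ lies in a single class.
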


The following theorem describes the limit of $\{C^m(D)\}_{m=1}^{\infty}$ for a strongly connected digraph $D$.
Given a graph $G$, we denote the clique of $G$ with the vertex set $Z$ by $K[Z]$.

\begin{Thm}[\cite{park2013matrix}]\label{thm:strong}
  If $D$ is nontrivial and strongly connected, then the limit of $\{C^m(D)\}_{m=1}^{\infty}$ is \[\bigcup_{i=1}^{\kappa(D)}{K{[U_{i}]}}\]
   where $U_1, U_2, \ldots, U_{\kappa(D)}$ are the sets of imprimitivity of $D$.
\end{Thm}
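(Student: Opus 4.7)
The plan is to exploit the monotonicity of $\{C^m(D)\}_{m=1}^{\infty}$ together with the cyclic structure of a strongly connected digraph. Since $D$ is strongly connected and nontrivial, every vertex has positive out-degree, so $D$ has no sinks; Corollary~\ref{cor:nosinks} then guarantees convergence, and Proposition~\ref{prop:adjacent} tells us the edge sets are nondecreasing in $m$. Consequently the limit graph $G$ has edge set $\bigcup_{m \ge 1} E(C^m(D))$, and it suffices to characterize, for two distinct vertices $u$ and $v$, when they become adjacent in some $C^m(D)$.

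For the ``only if'' direction, I would argue directly from the defining property of the sets of imprimitivity: every arc of $D$ goes from $U_i$ to $U_{i+1}$ (indices taken modulo $\kappa(D)$), so by an easy induction on $m$ every $m$-step prey of a vertex in $U_i$ lies in $U_{i+m \bmod \kappa(D)}$. Hence if $u \in U_i$ and $v \in U_j$ with $i \not\equiv j \pmod{\kappa(D)}$, the sets of $m$-step preys of $u$ and $v$ are disjoint for every $m$, so $u$ and $v$ are never adjacent in any $C^m(D)$.

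For the ``if'' direction, the key step is to pass to the $\kappa(D)$-th power of $D$. Writing $\kappa = \kappa(D)$ and letting $A$ be the adjacency matrix of $D$, a simultaneous permutation of rows and columns grouping the vertices of each $U_i$ together makes $A^{\kappa}$ block diagonal with blocks $B_1, \dots, B_{\kappa}$, where $B_i$ is the adjacency matrix of the digraph on $U_i$ whose arcs record length-$\kappa$ walks in $D$. A standard consequence of the cyclic-structure theory of strongly connected digraphs (see, e.g., \cite{brualdi1991combinatorial}) is that each such $B_i$ is the adjacency matrix of a strongly connected primitive digraph: it is strongly connected because any two vertices of $U_i$ are joined in $D$ by a walk whose length is a multiple of $\kappa$, and primitive because the gcd of its closed-walk lengths equals $\kappa(D)/\kappa = 1$. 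By the existence of an exponent for a primitive matrix, there is $M$ such that $B_i^{M'}$ is the all-ones matrix for every $M' \ge M$ and every $i$, which forces every $U_i \times U_i$ block of $A^{\kappa M'}$ to be all ones. Taking $m := \kappa M$, any two $u, v \in U_i$ have all of $U_i$ as a common $m$-step prey and are therefore adjacent in $C^m(D)$.

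Combining the two directions yields $E(G) = \bigcup_{i=1}^{\kappa(D)} E(K[U_i])$, which is the desired decomposition $G = \bigcup_{i=1}^{\kappa(D)} K[U_i]$. The main technical point I anticipate is verifying that $D^{\kappa(D)}[U_i]$ is strongly connected and primitive; once this classical fact is in hand, the rest is a routine combination of monotonicity and the existence of an exponent for each primitive block.
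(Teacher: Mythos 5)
Your proof is correct. Note that the paper does not prove Theorem~\ref{thm:strong} itself---it is quoted from \cite{park2013matrix} without argument---so there is no in-paper proof to compare against; your route (monotonicity of the edge sets via Proposition~\ref{prop:adjacent}, the mod-$\kappa(D)$ obstruction showing vertices in distinct sets of imprimitivity never acquire a common prey, and the classical block-diagonal decomposition of $A^{\kappa(D)}$ into primitive blocks indexed by the $U_i$, as in \cite{brualdi1991combinatorial}) is the standard one and is essentially the argument of the cited reference.
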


By Proposition~\ref{prop:index}, $\kappa(D)\le 4$ for a strongly connected  multipartite tournament $D$ and so we have the following theorem by Theorem~\ref{thm:strong}.

\begin{Thm}\label{prop:strong}
  Let $D$ be a strongly connected multipartite tournament.
  Then the limit of $\{C^m(D)\}_{m=1}^{\infty}$ is the disjoint union of at most four complete graphs whose vertex sets are the sets of imprimitivity of $D$.
\end{Thm}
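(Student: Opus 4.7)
The plan is to derive Theorem~\ref{prop:strong} as a direct synthesis of Theorem~\ref{thm:strong} and Proposition~\ref{prop:index}, both of which are already at our disposal. Since $D$ is a $k$-partite tournament with $k \ge 2$, $D$ contains vertices in at least two distinct partite sets (indeed, strong connectivity forces it to be much larger than that), so $D$ is nontrivial and Theorem~\ref{thm:strong} applies.

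Invoking Theorem~\ref{thm:strong}, I would immediately obtain
\[
\lim_{m \to \infty} C^m(D) \;=\; \bigcup_{i=1}^{\kappa(D)} K[U_i],
\]
where $U_1, \ldots, U_{\kappa(D)}$ are the sets of imprimitivity of $D$. By the definition recalled in the introduction, $\{U_1, \ldots, U_{\kappa(D)}\}$ is a partition of $V(D)$ into nonempty subsets, so the cliques $K[U_i]$ have pairwise disjoint vertex sets; hence the union above is genuinely a \emph{disjoint} union of complete graphs whose vertex sets are precisely the sets of imprimitivity of $D$. Applying Proposition~\ref{prop:index} then furnishes the bound $\kappa(D) \in \{1,2,3,4\}$, so the disjoint union has at most four terms, which gives exactly the conclusion of Theorem~\ref{prop:strong}.

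There is no substantive obstacle in this argument: Theorem~\ref{prop:strong} is in essence a convenient repackaging of two previously established facts, and the only point demanding verification is the pairwise vertex-disjointness of the cliques, which is automatic from the definition of sets of imprimitivity. The value of isolating this statement is that it provides a clean template that will be applied repeatedly to the last strong component $Q_s$ in the analysis carried out in Sections~\ref{sec:3} and \ref{sec:4}.
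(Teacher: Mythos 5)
Your proposal is correct and matches the paper exactly: the paper derives this theorem in one line by combining Theorem~\ref{thm:strong} with the bound $\kappa(D)\le 4$ from Proposition~\ref{prop:index}, precisely as you do. Your added remarks on nontriviality and the disjointness of the sets of imprimitivity are routine verifications the paper leaves implicit.
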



As the limit of a strongly connected multipartite tournament is taken care of, we consider the ones that are not strongly connected from now on.

Let $D$ be a multipartite tournament with ordered strong components $Q_1,Q_2,\ldots, Q_s$ and $X$ and $Y$ be partite sets of $Q_i$ and $Q_j$, respectively, for some $1 \le i, j \le s$.
We say that $X$ and $Y$ are {\it partite-related} if the partite set of $D$ including $X$ and the partite set of $D$ including $Y$ are the same.
By definition, if $i<j$ and $X$ and $Y$ are not partite-related, then $X \to Y$.
In addition, when we denote the partite sets of $D$ by $(V_1,V_2,\ldots,V_k)$ for some positive integer $k$, unless otherwise mentioned, we assume that
\begin{equation}\label{eqn:bi}
  V_i \cap V(Q_s)=
  \begin{cases}
    U_{i}^{(s)}, & \mbox{if } \kappa(Q_s)=2;\\
  U_{i}^{(s)}, & \mbox{if } \kappa(Q_s)=3;\\
    U_{i}^{(s)} \cup U_{i+2}^{(s)}, &  \mbox{if } \kappa(Q_s)=4,
  \end{cases}
\end{equation}
for each $i=1,2,3$ ($i=1,2$ if $\kappa(Q_s)=2$ or $4$).




The above observation together with Theorem~\ref{prop:strong} give rise to the following corollary.

\begin{Cor}\label{Prop:converge}
  Let $D$ be a multipartite tournament with the nontrivial last strong component $Q_s$ for some integer $s \ge 2$.
  Then the subgraph of the limit of $\{C^m(D)\}_{m=1}^{\infty}$ induced by $V(Q_s)$ is
  \[
       \bigcup_{i=1}^{\kappa(Q_s)}{K{\left[U_{i}^{(s)}\right]}}.
\]
\end{Cor}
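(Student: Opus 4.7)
The plan is to reduce the statement to Theorem~\ref{prop:strong} applied to $Q_s$ itself, by exploiting the fact that $Q_s$ is the \emph{last} ordered strong component. First I would observe that since no arc of $D$ goes from $Q_i$ to $Q_j$ when $i > j$, and $Q_s$ is the last component, there is in particular no arc leaving $V(Q_s)$. Consequently, every directed walk in $D$ that starts at a vertex of $V(Q_s)$ is entirely contained in $V(Q_s)$, and hence coincides with a directed walk in $Q_s$.

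Next I would translate this into a statement about prey. For any two vertices $u,v \in V(Q_s)$ and any positive integer $m$, a vertex $w$ is an $m$-step common prey of $u$ and $v$ in $D$ if and only if $w \in V(Q_s)$ and $w$ is an $m$-step common prey of $u$ and $v$ in $Q_s$. Therefore the induced subgraph of $C^m(D)$ on $V(Q_s)$ equals $C^m(Q_s)$ for every $m \ge 1$.

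Finally I would take the limit. Since $Q_s$ is nontrivial and strongly connected, Theorem~\ref{prop:strong} applies and gives that $\{C^m(Q_s)\}_{m=1}^{\infty}$ converges to
\[
\bigcup_{i=1}^{\kappa(Q_s)} K\!\left[U_{i}^{(s)}\right].
\]
On the other hand, $D$ has no sinks by Proposition~\ref{Prop:nosinks}, so Corollary~\ref{cor:nosinks} guarantees that $\{C^m(D)\}_{m=1}^{\infty}$ itself converges, and induced-subgraph formation commutes with taking the limit of such eventually constant sequences. Combining these facts with Step~2 yields the desired description of the induced subgraph of the limit on $V(Q_s)$.

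No real obstacle is expected: the only substantive ingredient is the trapping property of the last strong component, and everything else is an immediate application of earlier results in the paper. The slight subtlety to check carefully is that the limit of the induced subgraphs equals the induced subgraph of the limit, which follows at once from the fact that both graph sequences are eventually constant on the same vertex set.
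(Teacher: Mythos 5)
Your proposal is correct and is essentially the paper's own (unstated) argument: the paper derives this corollary directly from Theorem~\ref{prop:strong} together with the observation that $Q_s$, being the last ordered strong component, has no outgoing arcs, which is exactly your trapping argument showing the induced subgraph of $C^m(D)$ on $V(Q_s)$ equals $C^m(Q_s)$. Your explicit check that taking limits commutes with taking induced subgraphs (both sequences being eventually constant) is a sound way to finish, and matches the intended reasoning.
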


\section{The case $\kappa(Q_s) \in \{1,2,4\}$}\label{sec:3}

\begin{Prop}\label{prop:lastprimitive}
Let $D$ be a multipartite tournament with the primitive last strong component.
Then the limit of $\{C^m(D)\}_{m=1}^{\infty}$ is a complete graph.
\end{Prop}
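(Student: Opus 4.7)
The plan is to invoke the criterion ($\ast$): since $Q_s$ is primitive it is nontrivial, so by Proposition~\ref{Prop:nosinks} the digraph $D$ has no sinks, and to show that a pair of vertices is adjacent in the limit it is enough to exhibit an $m$-step common prey for some $m$. I would therefore aim to produce, for any two vertices $u,v \in V(D)$, a single vertex $x \in V(Q_s)$ that is an $m$-step prey of both, for all sufficiently large $m$; this would immediately force the limit to be $K_{|V(D)|}$.

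The first ingredient is primitivity itself. Let $M=\exp(Q_s)$. Then for every $m \ge M$ and every pair $a,b \in V(Q_s)$ there is a directed walk of length $m$ from $a$ to $b$ that stays inside $Q_s$; in particular every vertex of $V(Q_s)$ is an $m$-step prey of every vertex of $V(Q_s)$.

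The second ingredient is a structural observation: every vertex $w \notin V(Q_s)$ has an out-neighbor inside $V(Q_s)$. Since $Q_s$ is nontrivial and strongly connected, and has no loops, $V(Q_s)$ must meet at least two partite sets of $D$. Hence if $w$ lies in the partite set $V_j$, there is some $x^* \in V(Q_s)$ in a different partite set, and the arc between $w$ and $x^*$ must point $w \to x^*$ because $Q_s$ is the last strong component and therefore no arc leaves $V(Q_s)$.

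Combining the two ingredients: fix any $x \in V(Q_s)$ and any $m \ge M+1$. For $u \in V(Q_s)$, there is a walk of length $m$ from $u$ to $x$ by primitivity; for $u \notin V(Q_s)$, take an arc $u \to u'$ with $u' \in V(Q_s)$ and prepend it to a walk of length $m-1 \ge M$ from $u'$ to $x$, which exists by primitivity. The same applies to $v$, so $x$ is an $m$-step common prey of $u$ and $v$. Applying ($\ast$) yields that every pair of vertices is adjacent in the limit, and thus the limit is $K_{|V(D)|}$. The only step requiring a genuine argument is the structural observation that $V(Q_s)$ meets at least two partite sets; the rest is a bookkeeping of walk lengths.
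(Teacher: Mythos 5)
Your proposal is correct and follows essentially the same route as the paper: both arguments fix a target vertex in $Q_s$, use the exponent of the primitive component to get walks of any sufficiently large length inside $Q_s$, and use the fact that $Q_s$ meets at least two partite sets (together with $Q_s$ being the last strong component) to route every vertex of $D$ into $Q_s$ by a single arc before invoking ($\ast$). The only cosmetic difference is that the paper treats $u \in V(Q_s)$ and $u \notin V(Q_s)$ uniformly via the out-neighbor in $Q_s$, whereas you split the two cases.
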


\begin{proof}
Let $Q_s$ be the last strong component of $D$.
Take two vertices $u \in V(D)$ and $v \in V(Q_s)$.
  We let $N= \exp(Q_s) +1$ and take an integer $m \ge N$.
  Since $Q_s$ is primitive, $Q_s$ is nontrivial and so has at least two partite sets.
  Therefore there is an arc $(u,w)$ for some vertex $w$ in $Q_s$.
  Moreover, $Q_s$ being primitive implies that there are $(w,v)$-directed walk of length $m-1$ in $Q_s$.
  Since $(u,w)$ is an arc in $D$, there is a $(u,v)$-directed walk of length $m$ and so $v$ is an $m$-step prey of $u$.
  Since $u$ was arbitrarily chosen from $V(D)$, $v$ is an $m$-step prey of any vertex in $V(D)$.
  Thus the limit of $\{C^m(D)\}_{m=1}^{\infty}$ is a complete graph by ($\ast$).
\end{proof}

Bondy~\cite{bondy1976diconnected} showed that, for an integer $k\ge3$, a strongly connected $k$-partite tournament contains a directed cycle of length $\ell$ for each integer $3 \le \ell \le k$.
If the last strong component of a multipartite tournament $D$ has at least four partite sets, then it is primitive and so the limit of $\{C^m(D)\}_{m=1}^{\infty}$ is a complete graph by Proposition~\ref{prop:lastprimitive}.
Thus, from now on, we only consider a multipartite tournament whose last strong component has at most three partite sets.

Let $D$ be a bipartite tournament with the nontrivial last strong component $Q_s$ for some integer $s \ge 2$.
  Then $\kappa(Q_s) \in \{2,4\}$ by Proposition~\ref{prop:index} and this justifies why Theorem~\ref{thm:bipartite}
only considers the case $\kappa(Q_s)=2$ and the case $\kappa(Q_s)=4$.

\begin{Thm}\label{thm:bipartite}
  Let $D$ be a bipartite tournament with the nontrivial last strong component $Q_s$ for some integer $s \ge 2$.
  Then the limit $G$ of the graph sequence $\{C^m(D)\}_{m=1}^{\infty}$ is as follows:
\[
G \cong \begin{cases}
          G_1, & \mbox{if } \kappa(Q_s)=2; \\
          G_2, & \mbox{if } \kappa(Q_s)=4,
        \end{cases}
\]
where $G_1$ and $G_2$ are the graphs given in Figure~\ref{fig:graphs}.
\end{Thm}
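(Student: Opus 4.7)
The plan is to classify each vertex $u\in V(D)$ by a residue invariant in $\ZZ_{\kappa(Q_s)}$ that captures how walks from $u$ eventually land in the sets of imprimitivity of $Q_s$, and then to read off the clique structure $G_1$ (resp.\ $G_2$) from this invariant. By Corollary~\ref{Prop:converge}, the induced subgraph of the limit on $V(Q_s)$ is $\bigcup_{i=1}^{\kappa(Q_s)} K[U_i^{(s)}]$, so only adjacencies with at least one endpoint in $V(D)\setminus V(Q_s)$ still need to be analysed. Note also that in a bipartite tournament the complete-bipartite adjacency between the two partite sets forces $Q_s$ to be the unique sink of the condensation of $D$, so every vertex of $D$ reaches $V(Q_s)$.

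For each $u\in V(D)$, define
\[
T_u \;=\; \{\,(a-d) \bmod \kappa(Q_s) \,:\, u \text{ has a directed walk of length } d \text{ ending in } U_a^{(s)}\,\} \;\subseteq\; \ZZ_{\kappa(Q_s)}.
\]
Then $T_u\neq\emptyset$ by the observation above, and for $u\in U_i^{(s)}$ every walk is forced to remain in $Q_s$, so $T_u=\{i\bmod\kappa(Q_s)\}$. The central claim to establish is
\[
u \text{ and } v \text{ are adjacent in the limit} \iff T_u \cap T_v \neq \emptyset.
\]
For $(\Leftarrow)$, given $r\in T_u\cap T_v$ and realising walks through some $U_a^{(s)}$ and $U_{a'}^{(s)}$, extend both walks inside $Q_s$ to a common large length $m$ terminating at a single chosen vertex of $U_j^{(s)}$ with $j\equiv r+m\pmod{\kappa(Q_s)}$; such extensions exist because in a strongly connected digraph, walks of length $\ell$ between prescribed sets of imprimitivity exist for all sufficiently large $\ell$ of the correct residue. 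Adjacency in the limit then follows from ($\ast$). For $(\Rightarrow)$, given any common $m$-step prey $z$, push it into $V(Q_s)$: choose a walk from $z$ to some $w\in V(Q_s)$ of length $\ell$, obtain the common $(m+\ell)$-step prey $w\in U_j^{(s)}$, and read off $(j-m-\ell)\bmod \kappa(Q_s)\in T_u\cap T_v$.

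With the key lemma in hand, the bipartite parity constraint finishes the proof. Since walks from $V_i$ alternate partite sets, $T_u$ consists of odd residues modulo $\kappa(Q_s)$ when $u\in V_1$ and of even residues when $u\in V_2$. When $\kappa(Q_s)=2$ this collapses $T_u$ to $\{1\}$ on all of $V_1$ and to $\{0\}$ on all of $V_2$, so the limit equals the disjoint union $K[V_1]\cup K[V_2]$; this is $G_1$ with $K^{(1)}$ empty, $K^{(2)}=V_1$, $K^{(3)}=V_2$. When $\kappa(Q_s)=4$ we have $T_u\subseteq\{1,3\}$ for $u\in V_1$ and $T_u\subseteq\{0,2\}$ for $u\in V_2$, giving at most three $T$-classes on each side; each singleton-residue class absorbs the matching $U_i^{(s)}$ into a single clique (yielding $K^{(4)},K^{(5)},K^{(6)},K^{(7)}$), the two full-parity classes $\{u\in V_1:T_u=\{1,3\}\}$ and $\{u\in V_2:T_u=\{0,2\}\}$ play the role of $K^{(2)}$ and $K^{(3)}$, and $K^{(1)}$ stays empty. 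The rule $T_u\cap T_v\neq\emptyset$ then reproduces exactly the edge pattern prescribed by $G_2$.

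The step I expect to be the main obstacle is the $(\Rightarrow)$ direction of the key lemma: carefully pushing an arbitrary common prey (which may lie in an intermediate strong component) into $V(Q_s)$ while keeping track of the residue modulo $\kappa(Q_s)$, and then double-checking that no unexpected coincidence collapses $T$-classes that should remain separate. Once this is settled, identifying the labelled $T$-classes with the prescribed cliques of $G_1$ and $G_2$ is a straightforward case check.
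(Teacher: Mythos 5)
Your proposal is correct, but it takes a genuinely different route from the paper. The paper argues directly: since $Q_s$ is the unique sink of the condensation and meets both partite sets, every vertex of $V_i-V(Q_s)$ dominates all of $V_{3-i}\cap V(Q_s)$, which immediately produces $1$-step common preys for every pair that must be adjacent outside $Q_s$; Corollary~\ref{Prop:converge} handles the pairs inside $Q_s$, and bipartiteness trivially rules out all $V_1$--$V_2$ edges. You instead prove a clean \emph{iff} criterion: $u$ and $v$ are adjacent in the limit exactly when the residue sets $T_u, T_v\subseteq\ZZ_{\kappa(Q_s)}$ intersect. Both directions of your key lemma check out --- the forward direction because every vertex has an out-neighbour in $V(Q_s)$ (so a common prey can always be pushed into $Q_s$ while tracking the residue, and the obstacle you flagged evaporates), and the reverse direction by the standard fact that in a strongly connected digraph with index of imprimitivity $\kappa$ there are $(x,w)$-walks of every sufficiently large length in the correct residue class, combined with~($\ast$). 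Your computation of the $T$-classes (singletons $\{i\}$ on $U_i^{(s)}$, full parity classes $\{1,3\}$ and $\{0,2\}$ on $V_1-V(Q_s)$ and $V_2-V(Q_s)$ when $\kappa=4$) then reproduces $G_1$ and $G_2$ exactly as in the paper, with the same clique assignments. What your approach buys is uniformity: adjacency and non-adjacency are both read off from one invariant, and the lemma is really a general principle in the spirit of Theorem~\ref{thm:strong} rather than something special to bipartite tournaments. What it costs is reliance on the asymptotic walk-length fact for imprimitive strong digraphs, which the paper's more elementary one-step domination argument avoids entirely.
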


\begin{proof}
Let $(V_1,V_2)$ be a bipartition of $D$.
  Since $Q_s$ is nontrivial, $\kappa(Q_s) \in \{2,4\}$ as we observed prior to the theorem statement.
  Moreover, $V_1 \cap V(Q_s) \neq \emptyset$ and $V_2 \cap V(Q_s) \neq \emptyset$.
  Then, by our assumption \eqref{eqn:bi},
\[
  V_i \cap V(Q_s)=
  \begin{cases}
    U_{i}^{(s)}, & \mbox{if } \kappa(Q_s)=2;  \\
    U_{i}^{(s)} \cup U_{i+2}^{(s)}, &  \mbox{if } \kappa(Q_s)=4,
  \end{cases}
\]
for each $i=1,2$.
Then there is an arc from each vertex in $V_1 \cap V(Q_s)$ (resp.\ $V_2 \cap V(Q_s)$) to some vertex in $V_2 \cap V(Q_s)$ (resp.\ $V_1 \cap V(Q_s)$).
  By the way, $V_1 \cap \{V(D)- V(Q_s)\} \to V_2 \cap V(Q_s)$ and
  $V_2 \cap \{V(D)- V(Q_s)\} \to V_1 \cap V(Q_s)$.
  Therefore the following are true:
  \begin{itemize}
    \item[(i)] any pair of vertices in $V_1 \cap \{V(D)- V(Q_s)\}$ has a common prey in $D$;
    \item[(ii)] any pair of vertices in $V_2 \cap \{V(D)- V(Q_s)\}$ has a common prey in $D$;
    \item[(iii)] each vertex in $V_1 \cap V(Q_s)$ and each vertex in $V_1 \cap \{V(D)- V(Q_s)\}$ have a common prey in $D$;
    \item[(iv)] each vertex in $V_2 \cap V(Q_s)$ and each vertex in $V_2 \cap \{V(D)- V(Q_s)\}$ have a common prey in $D$.
  \end{itemize}
  Therefore, by ($\ast$), we have shown that, for each $i=1,2$,
   \begin{itemize}
    \item $V_i \cap \{V(D)- V(Q_s)\}$ forms a clique in $G$;
    \item each vertex in $V_i \cap V(Q_s)$ and each vertex in $V_i \cap \{V(D)- V(Q_s)\}$ are adjacent in $G$.
  \end{itemize}
  Since $D$ is a bipartite tournament, there is no edge between a vertex in $V_1$ and a vertex in $V_2$ in $G$.
  Thus, by Corollary~\ref{Prop:converge},
   \begin{itemize}
     \item if $\kappa(Q_s)=2$, then $G \cong G_1$ where $K^{(1)}$ does not exist, $K^{(2)}=K[V_1]$, and $K^{(3)}=K[V_2]$;
     \item if $\kappa(Q_s)=4$, then $G \cong G_2$ where $K^{(1)}$ does not exist,
  $K^{(2)}=K{[V_1 - V(Q_s)]}$, $K^{(3)}=K{[V_2 - V(Q_s)]}$, $K^{(4)}=K{\left[U_{1}^{(s)}\right]}$, $K^{(5)}=K{\left[U_{3}^{(s)}\right]}$, $K^{(6)}=K{\left[U_{2}^{(s)}\right]}$, and $K^{(7)}=K{\left[U_{4}^{(s)}\right]}$.\qedhere
   \end{itemize}
\end{proof}

  The following proposition gives a detailed description of the limit of $\{C^m(D)\}_{m=1}^{\infty}$ in case where $D$ contains a strong component one of whose partite sets is not partite-related to any partite set of the last strong component.

\begin{Prop}\label{prop:not partite-related}
  For some integers $k \ge 3$ and  $s \ge 2$, let $D$ be a $k$-partite tournament with ordered strong components $Q_1,Q_2,\ldots, Q_s$.
Suppose that $Q_s$ is nontrivial and there is an integer $t \in \{1,\ldots, s-1\}$ such that a partite set of $Q_t$ is not partite-related to any partite set of $Q_s$.
Then any vertex in $\bigcup_{i=1}^{t}{V(Q_i)}$ and any vertex in $D$ are adjacent in the limit of the graph sequence $\{C^m(D)\}_{m=1}^{\infty}$.
\end{Prop}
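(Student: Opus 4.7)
The plan is to invoke $(\ast)$ and, for arbitrary $u \in \bigcup_{i=1}^{t} V(Q_i)$ and $v \in V(D)$, to exhibit a single vertex $y \in V(Q_s)$ that is an $m$-step prey of both $u$ and $v$ for some common $m$. The structural asset supplied by the hypothesis is the following: let $X$ be a partite set of $Q_t$ not partite-related to any partite set of $Q_s$, and let $V_{\ast}$ be the partite set of $D$ containing $X$. Then $V_{\ast} \cap V(Q_s) = \emptyset$, so the definition of a multipartite tournament together with the strong-component ordering (no arcs from $V(Q_s)$ to any earlier $V(Q_j)$) forces every vertex of $V_{\ast}$ to have an arc to every vertex of $V(Q_s)$. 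In particular, for any fixed $x \in X$ we have $x \to y$ for every $y \in V(Q_s)$; this ``hub'' at $x$ is the key tool because it lets us enter $V(Q_s)$ at an arbitrary set of imprimitivity and so control walk length modulo $\kappa(Q_s)$.

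A short preliminary that does not use the hypothesis: every $w \in V(D) \setminus V(Q_s)$ has at least one out-neighbor inside $V(Q_s)$. Indeed, $Q_s$ being nontrivial and strongly connected forces $V(Q_s)$ to meet at least two partite sets of $D$, so at least one of them differs from the partite set containing $w$; the arc $w \to y$ for some $y \in V(Q_s)$ then comes from the multipartite structure together with the strong-component ordering.

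The main technical step is to prove that for each $u \in \bigcup_{i=1}^{t} V(Q_i)$ there is a threshold $M_u$ such that, for every $m \ge M_u$ and every $y \in V(Q_s)$, $u$ has a directed walk of length $m$ ending at $y$. I would split on whether $u \in V_{\ast}$. If $u \in V_{\ast}$, then $u$ itself has the hub property: one step goes from $u$ to a chosen $y_0 \in V(Q_s)$, then a walk of length $m-1$ inside $Q_s$ finishes the job. If $u \notin V_{\ast}$, first reach $x$: when $u \in V(Q_t)$ this is a walk inside $Q_t$, which is strongly connected and nontrivial because it contains both $u$ and $x$; when $u \in V(Q_i)$ with $i < t$ the arc $u \to x$ is forced, since $u$ and $x$ lie in different partite sets and $Q_i$ precedes $Q_t$. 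Calling this length $\ell_u$, one further step enters any desired class $U_{j_0}^{(s)}$ and the remaining $m - \ell_u - 1$ steps stay inside $Q_s$. Because $j_0$ is free in $\{1, \ldots, \kappa(Q_s)\}$, for all sufficiently large $m$ we can match the residue of $m - \ell_u - 1$ modulo $\kappa(Q_s)$, and the standard length-spectrum result for a strongly connected digraph then produces a walk of length $m$ from $u$ to any prescribed $y \in V(Q_s)$.

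Finally, for the given $v$, the preliminary observation yields a walk of length at most one from $v$ into $V(Q_s)$; extending inside $Q_s$ shows that, for every sufficiently large $m$, at least one vertex $y^{\ast} \in V(Q_s)$ is an $m$-step prey of $v$. Taking $m \ge M_u$ and applying the previous step at $y = y^{\ast}$, we obtain $y^{\ast}$ as a common $m$-step prey of $u$ and $v$; then $(\ast)$ gives the desired adjacency in the limit. I expect the main obstacle to be precisely the flexibility statement for $u$: we cannot settle for $u$ merely reaching $V(Q_s)$, because the imprimitivity class hit by $v$ may vary with $m$ in a way we do not control; the freedom to pick the entry vertex at the hub $x$ absorbs this residue mismatch, and is the only point where non-partite-relatedness of $X$ is genuinely used.
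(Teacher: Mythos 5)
Your proof is correct, and it rests on the same structural fact as the paper's --- the partite set $Z$ (your $V_{\ast}$) of $D$ containing the distinguished partite set of $Q_t$ is disjoint from $V(Q_s)$, so each of its vertices out-dominates all of $V(Q_s)$ --- but you exploit that fact by a genuinely different mechanism. You use the hub to choose the \emph{entry class} of imprimitivity and then invoke the length-spectrum theorem for the strongly connected digraph $Q_s$ to establish the stronger claim that every vertex of $Q_s$ is an $m$-step prey of $u$ for all sufficiently large $m$; matching against $v$ is then immediate. The paper instead synchronizes lengths exactly and finitely: it walks $u$ to a vertex $z \in Z \cap V(Q_t)$ along a path of length $\ell$, walks $v$ one step into $Q_s$ to some $w$ and then $\ell$ further steps inside $Q_s$ to whatever vertex $x$ that walk happens to end at, and observes $z \to x$; hence $x$ is an $(\ell+1)$-step common prey, with no residue bookkeeping, no threshold, and no appeal to the asymptotic walk-length structure of $Q_s$. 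The trade-off: the paper's argument is shorter and fully explicit (the common length is $\ell+1$), whereas yours proves a reusable ``universal prey'' property of $u$ --- in the spirit of the flexibility encoded in Lemma~\ref{Lem:useful} --- that is stronger than what the proposition requires and costs you an extra standard input (existence of walks of every sufficiently large admissible length between any two vertices of a strongly connected digraph) that the paper's proof of this proposition never needs.
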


\begin{proof}
By the hypothesis, there is a partite set of $Q_t$ that is not partite-related to any partite set of $Q_s$.
Let $Z$ be the partite set of $D$ that contains such a partite set of $Q_t$.
  Take two vertices $u \in \bigcup_{i=1}^{t}{V(Q_i)}$ and $v \in V(D)$.
  Since $Q_s$ is nontrivial, $Q_s$ has at least two partite sets and so $v \to w$ for some vertex $w$ in $Q_s$.
If $u \in Z$, then $u \to V(Q_s)$ and so $w$ is a common prey of $u$ and $v$.

Suppose $u \not\in Z$.
Take a vertex $z \in Z \cap V(Q_t)$.
  Then there is a $(u,z)$-directed path $P$ in $D$, which is true by the strong connectedness of $Q_t$ if $u \in V(Q_t)$ and by the fact that $u$ belongs to a partite set different from $Z$ (which implies $u \to Z$) if $u \in \bigcup_{i=1}^{t-1}{V(Q_i)}$.
  Let $\ell$ be the length of $P$.
  Since $Q_s$ is strongly connected, there is a $(w,x)$-directed walk $P'$ of length $\ell$ for some vertex $x \in V(Q_s)$.
  By the choice of $Z$, $z \to x$.
  Thus $P \to x$ and $v \to P'$ are a $(u,x)$-directed walk and a $(v,x)$-directed walk, respectively, of the same length $\ell+1$ in $D$.
  Therefore $x$ is an $(\ell+1)$-step common prey of $u$ and $v$ in $D$.
  Therefore $u$ and $v$ have an $m$-step common prey in $D$ for some positive integer $m$ and so the statement is true by ($\ast$).
\end{proof}

In the rest of this paper, we will frequently use the notation $\bigcup_{i=p}^{q}{V(Q_i)}$ for positive integers $p$, $q$.
We will regard $\bigcup_{i=p}^{q}{V(Q_i)}$ as an empty set if $p>q$.

\begin{Thm}\label{Thm:tritrinontri}
  Let $D$ be a $k$-partite tournament with $k$-partition $(V_1,V_2,\ldots,V_k)$ and ordered strong components $Q_1,Q_2,\ldots, Q_s$ for some integers $k \ge 3$ and  $s \ge 2$.
Suppose that $Q_s$ is nontrivial and $\kappa(Q_s)\in \{2,4\}$.
Then the limit $G$ of the graph sequence $\{C^m(D)\}_{m=1}^{\infty}$ is as follows:
\[
G \cong \begin{cases}
          G_1, & \mbox{if } \kappa(Q_s)=2; \\
          G_2, & \mbox{if } \kappa(Q_s)=4,
        \end{cases}
\]
where $G_1$ and $G_2$ are the graphs given in Figure~\ref{fig:graphs}.
\end{Thm}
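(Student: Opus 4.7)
The plan is to mirror the argument of Theorem~\ref{thm:bipartite} for the bipartite case while using Proposition~\ref{prop:not partite-related} to extract a new ``universally adjacent'' clique $K^{(1)}$ arising from the partite sets outside those hosting $Q_s$.  First I would note that, since $\kappa(Q_s) \in \{2,4\}$, Corollary~\ref{cor:kappa}(ii),(iv) forces $Q_s$ to have exactly two partite sets which, after relabeling, lie in $V_1$ and $V_2$; so $V(Q_s) \subseteq V_1 \cup V_2$ with the sets of imprimitivity arranged as in~\eqref{eqn:bi}.  Because $k \geq 3$, the set $V_3 \cup \cdots \cup V_k$ is nonempty and disjoint from $V(Q_s)$, so each of its vertices lies in some $Q_t$ with $t < s$.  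Letting $t^{\ast}$ be the largest such index and $W := V(Q_1) \cup \cdots \cup V(Q_{t^{\ast}})$, Proposition~\ref{prop:not partite-related} applied at $t=t^{\ast}$ yields that every vertex of $W$ is adjacent in $G$ to every vertex of $D$.  Hence $W$ is a clique and will play the role of $K^{(1)}$.

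Next I would analyze the remaining vertices.  By the choice of $t^{\ast}$ we have $V(D)\setminus W \subseteq V_1 \cup V_2$, and by the ordering of strong components no arc leaves $V(D)\setminus W$ for $W$.  Consequently every directed walk that starts in $V(D)\setminus W$ stays inside the bipartite induced sub-tournament on $(V_1 \setminus W,\, V_2 \setminus W)$ and thus alternates between the two parts; in particular, the $m$-step preys of any vertex of $V_1 \setminus W$ and those of any vertex of $V_2 \setminus W$ lie in opposite parts for every $m$, so these two sets are mutually non-adjacent in every $C^m(D)$ and hence in $G$.  For the positive adjacencies, each $u \in V_1 \setminus V(Q_s) \setminus W$ satisfies $u \to V_2 \cap V(Q_s)$ by the strong-component ordering together with the partite-tournament structure, which, in tandem with the cyclic arc pattern $U_i^{(s)} \to U_{i+1}^{(s)}$ inside $Q_s$ and with Corollary~\ref{Prop:converge}, supplies the common $1$-step preys needed to verify the remaining edges via~($\ast$).

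Putting the pieces together by cases: when $\kappa(Q_s)=2$, the sets $V_1 \cap V(Q_s) = U_1^{(s)}$ and $V_2 \cap V(Q_s) = U_2^{(s)}$ make $V_1 \setminus W$ and $V_2 \setminus W$ into cliques (with common preys in $U_2^{(s)}$ and $U_1^{(s)}$, respectively), and the identification $K^{(1)}=W$, $K^{(2)}=V_1\setminus W$, $K^{(3)}=V_2\setminus W$ exhibits $G \cong G_1$.  When $\kappa(Q_s)=4$, the sets $V_i \cap V(Q_s)$ split as $U_1^{(s)}\cup U_3^{(s)}$ and $U_2^{(s)}\cup U_4^{(s)}$; every vertex of $V_1 \setminus V(Q_s) \setminus W$ connects through $U_2^{(s)}$ (resp.\ $U_4^{(s)}$) to all of $U_1^{(s)}$ (resp.\ $U_3^{(s)}$), while Corollary~\ref{Prop:converge} forbids an edge between $U_1^{(s)}$ and $U_3^{(s)}$, and symmetrically on the $V_2$-side.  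Setting $K^{(2)}=V_1\setminus V(Q_s)\setminus W$, $K^{(3)}=V_2\setminus V(Q_s)\setminus W$, $K^{(4)}=U_1^{(s)}$, $K^{(5)}=U_3^{(s)}$, $K^{(6)}=U_2^{(s)}$, $K^{(7)}=U_4^{(s)}$ then realizes $G \cong G_2$.  I expect the main technical obstacle to be the mod-$\kappa(Q_s)$ bookkeeping in the $\kappa(Q_s)=4$ case, to ensure each labeled clique pair $(K^{(i)},K^{(j)})$ receives exactly the intended edge or non-edge; the non-edges inside $V(Q_s)$ come for free from Corollary~\ref{Prop:converge}, and the non-edges across the $V_1/V_2$ divide come from the bipartite alternation argument above.
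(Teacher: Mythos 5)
Your proposal is correct and follows essentially the same route as the paper: both identify the largest index $t$ for which $Q_t$ meets $V_3\cup\cdots\cup V_k$ (equivalently, has a partite set not partite-related to any partite set of $Q_s$), invoke Proposition~\ref{prop:not partite-related} to make $\bigcup_{i\le t}V(Q_i)$ the universally adjacent clique $K^{(1)}$, and then treat the remaining vertices as a bipartite tournament exactly as in Theorem~\ref{thm:bipartite}, with Corollary~\ref{Prop:converge} supplying the structure inside $V(Q_s)$. The only cosmetic difference is that the paper cites the proof of Theorem~\ref{thm:bipartite} for the induced subdigraph on $\bigcup_{i>t}V(Q_i)$ rather than re-deriving the adjacencies and the parity-based non-adjacencies as you do.
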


\begin{proof}
Since $\kappa(Q_s) \in \{2,4\}$, $Q_s$ is a bipartite tournament by Corollary~\ref{cor:kappa}
and so $V(Q_s) \subseteq V_1 \cup V_2$ by \eqref{eqn:bi}.
 Since $k \ge 3$, there exists an integer $t \in \{1,\ldots,s-1\}$ such that there is a partite set of $Q_t$ which is not partite-related to any partite set of $Q_s$.
 We may assume that $t$ is the largest among such integers.
 Then
  \[
  V(D) - \bigcup_{i=1}^{t}{V(Q_i)} \subseteq V_1 \cup V_2.
  \]
  Therefore the subdigraph $D'$ induced by $\bigcup_{i=t+1}^{s}V(Q_i)$ is a bipartite tournament with partite sets $A_1:=V_1 \cap \bigcup_{i=t+1}^{s}V(Q_i)$ and $A_2:=V_2 \cap \bigcup_{i=t+1}^{s}V(Q_i)$.
  In the proof of Theorem~\ref{thm:bipartite}, it is shown that the subgraph $G_1$ of $G$ induced by $V(D')$ contains $H_1:=K{[A_1]} \cup K{[A_2]}$ if $\kappa(Q_s)=2$ and $H_2:=\bigcup_{i=1}^{2}\left( K{[A_i - V(Q_s)]} \vee \left(K{\left[U_{i}^{(s)}\right]} \cup K{\left[U_{i+2}^{(s)}\right]}  \right)  \right)$ if $\kappa(Q_s)=4$.
  Since $Q_{t+1},\ldots,Q_s$ are the last components among the ordered strong components, $G_1$ cannot have edges other than ones in $H_1$ or $H_2$ and so $G_1 = H_1$ or $G_1 = H_2$.
 By Proposition~\ref{prop:not partite-related}, $\bigcup_{i=1}^{t}{V(Q_i)}$ forms a clique in $G$ and any vertex in $\bigcup_{i=1}^{t}V(Q_i)$ and any vertex in $\bigcup_{i=t+1}^{s}V(Q_i)$ are adjacent in $G$.
   Therefore we may conclude that
   \begin{itemize}
     \item if $\kappa(Q_s)=2$, then $G \cong G_1$ where $K^{(1)}=K\left[\bigcup_{i=1}^{t}V(Q_i)\right]$, $K^{(2)}=K[A_1]$, and $K^{(3)}=K[A_2]$;
     \item if $\kappa(Q_s)=4$, then $G \cong G_2$ where $K^{(1)}=K\left[\bigcup_{i=1}^{t}V(Q_i)\right]$, $K^{(2)}=K[A_1 - V(Q_s)]$, $K^{(3)}=K[A_2 - V(Q_s)]$, $K^{(4)}=K\left[U_{1}^{(s)}\right]$, $K^{(5)}=K\left[U_{3}^{(s)}\right]$, $K^{(6)}=K\left[U_{2}^{(s)}\right]$, and $K^{(7)}=K\left[U_{4}^{(s)}\right]$.\qedhere
   \end{itemize}
\end{proof}

%

\section{The case $\kappa(Q_s)=3$}\label{sec:4}
By Theorems~\ref{prop:strong}, \ref{thm:bipartite}, and \ref{Thm:tritrinontri}, it remains to characterize the limit of the graph sequence $\{C^m(D)\}_{m=1}^{\infty}$ for a multipartite tournament $D$ with $\kappa(Q_s) = 3$ where $Q_1, Q_2, \ldots, Q_s$ are ordered strong components of $D$ for some integer $s \ge 2$.
First, we need the following lemmas.

\begin{Lem}\label{Lem:useful}
  Let $D$ be a multipartite tournament with ordered strong components $Q_1$ and $Q_2$.
  Suppose that, for a positive integer $L$ and some vertices $x \in V(Q_1)$ and $y \in V(Q_2)$, there exists an $(x,y)$-directed walk of length $\ell$ for each $\ell \ge L$.
  Then there exists a positive integer $L'$ such that, for any vertex $u$ in $Q_1$ and for any vertex $v$ in $Q_2$, there exists a $(u,v)$-directed walk of length $\ell$ for each $\ell \ge L'$.
\end{Lem}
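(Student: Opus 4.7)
The plan is to reduce the problem for an arbitrary pair $(u,v) \in V(Q_1) \times V(Q_2)$ to the given pair $(x,y)$ by prepending a directed walk from $u$ to $x$ inside $Q_1$ and appending a directed walk from $y$ to $v$ inside $Q_2$, using the strong connectedness of $Q_1$ and $Q_2$ to control the lengths.

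First, I would fix, for each $u \in V(Q_1)$, a directed $(u,x)$-walk $W_u$ inside $Q_1$ (this exists since $Q_1$ is strongly connected, and if $Q_1$ is trivial we simply take the empty walk with $u=x$). Denote its length by $a_u$ and set $a := \max_{u \in V(Q_1)} a_u$. Symmetrically, for each $v \in V(Q_2)$, I would fix a directed $(y,v)$-walk $W'_v$ inside $Q_2$ of length $b_v$, and set $b := \max_{v \in V(Q_2)} b_v$.

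I would then take $L' := L + a + b$. Given any $u \in V(Q_1)$, any $v \in V(Q_2)$, and any $\ell \ge L'$, observe that
\[
\ell - a_u - b_v \;\ge\; L' - a - b \;=\; L,
\]
so by the hypothesis there exists an $(x,y)$-directed walk $W_\ell$ of length $\ell - a_u - b_v$ in $D$. The concatenation $W_u \cdot W_\ell \cdot W'_v$, glued at the shared vertices $x$ and $y$, is then a $(u,v)$-directed walk of length exactly $a_u + (\ell - a_u - b_v) + b_v = \ell$, which is what we need.

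I do not expect a genuine obstacle here: the only thing to verify is that the three walks can indeed be concatenated (which is immediate once the endpoints match at $x$ and at $y$) and that the choice of $L'$ is uniform in $u$ and $v$ (which is why we take the maxima $a$ and $b$). The multipartite tournament hypothesis is not essential for the lemma; only the strong connectedness of $Q_1$ and $Q_2$ and the uniform availability of $(x,y)$-walks for all sufficiently large lengths are used.
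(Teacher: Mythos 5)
Your proof is correct and follows essentially the same route as the paper: the paper also sets $L' = \max_{u \in V(Q_1)} d(u,x) + L + \max_{v \in V(Q_2)} d(y,v)$ and concatenates a $(u,x)$-path, an $(x,y)$-walk of the appropriate length, and a $(y,v)$-path. The only cosmetic difference is that you fix arbitrary walks rather than shortest paths, which changes nothing.
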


\begin{proof}
  Since $Q_1$ and $Q_2$ are strongly connected, there exist a $(u,x)$-directed path in $Q_1$ and a $(y,v)$-directed path in $Q_2$ for any $u \in V(Q_1)$ and $v \in V(Q_2)$.
  We let
  \[
  L'=\max_{u \in V(Q_1)}{d(u,x)}+L + \max_{v \in V(Q_2)}{d(y,v)}\]
  ($d(w,z)$ stands for the length of a shortest $(w,z)$-directed path), which is desired.
\end{proof}

Let $D$ be a multipartite tournament with ordered strong components $Q_1$ and $Q_2$.
We say that $D$ is {\it unusual} if $D$ satisfies the following conditions:
\begin{itemize}
  \item $\kappa(Q_1)=\kappa(Q_2)=3$;
  \item there exists an integer $j \in \{0,1,2\}$ such that $U_i^{(1)}$ and $U_{i+j}^{(2)}$ are partite related for each $i \in \{1,2,3\}$ (identify $U_4^{(2)}$ and $U_5^{(2)}$ with $U_1^{(2)}$ and $U_2^{(2)}$, respectively).
\end{itemize}

\begin{Lem}\label{lem:clique}
  Suppose that a multipartite tournament $D$ has ordered strong components $Q_1,Q_2,\ldots, Q_s$ and $\kappa(Q_s)=3$ for some integer $s \ge 2$.
  Then, for the limit $G$ of the graph sequence $\{C^m(D)\}_{m=1}^{\infty}$, the following are true:
  \begin{itemize}
    \item[(i)] $V(D) - V(Q_s)$ forms a clique in $G$;
     \item[(ii)] if $\kappa(Q_{s-1}) \neq 3$ and $Q_{s-1}$ is nontrivial, then each vertex in $V(D) - V(Q_s)$ and each vertex in $V(Q_s)$ are adjacent in $G$;
    \item[(iii)] if $\kappa(Q_{s-1})=3$, then each vertex in $V(D) - \left(V(Q_{s-1}) \cup V(Q_s)\right)$ and each vertex in $V(Q_s)$ are adjacent in $G$ and, further, in case where $V(Q_{s-1}) \cup V(Q_s)$ does not induce an unusual digraph, each vertex in $V(Q_{s-1})$ and each vertex in $V(Q_s)$ are adjacent in $G$.
  \end{itemize}

\end{Lem}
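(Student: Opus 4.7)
The plan is to invoke $(\ast)$ throughout: to show that two vertices are adjacent in $G$, it suffices to exhibit a positive integer $m$ witnessing a common $m$-step prey. Since $\kappa(Q_s)=3$, Corollary~\ref{cor:kappa}(iii) forces $Q_s$ to have exactly three partite sets, which by convention~\eqref{eqn:bi} are $U_i^{(s)}=V_i\cap V(Q_s)$ for $i\in\{1,2,3\}$. A key structural observation is that any $u\notin V(Q_s)$ has an arc to every vertex of $V(Q_s)$ outside $u$'s own partite set, and hence reaches at least two of $U_1^{(s)},U_2^{(s)},U_3^{(s)}$ in one step. For part~(i), given $u,v\in V(D)-V(Q_s)$, the two-of-three reachability yields a common $j$ together with out-neighbors $u'\in U_j^{(s)}$ of $u$ and $v'\in U_j^{(s)}$ of $v$. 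Because $u'$ and $v'$ lie in the same imprimitivity set of $Q_s$ and $\kappa(Q_s)=3$, for any $x\in U_j^{(s)}$ and every sufficiently large multiple of~$3$, walks from $u'$ to $x$ and from $v'$ to $x$ of that length both exist inside $Q_s$, so prepending the initial arcs produces a common $(\ell+1)$-step prey.

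For part~(ii), fix $u\in V(D)-V(Q_s)$ and $v\in V(Q_s)$. If some $Q_t$ with $t\le s-1$ has a partite set not partite-related to any partite set of $Q_s$ and $u\in\bigcup_{i=1}^{t}V(Q_i)$, Proposition~\ref{prop:not partite-related} already supplies the edge; so assume the partite sets of every $Q_t$ ($t\le s-1$) are confined to $\{V_1,V_2,V_3\}$. Corollary~\ref{cor:kappa} combined with $\kappa(Q_{s-1})\ne 3$ then leaves $\kappa(Q_{s-1})\in\{1,2,4\}$, hence $\gcd(\kappa(Q_{s-1}),3)=1$. This coprimality delivers the residue flexibility needed inside $Q_{s-1}$: walks of every sufficiently large length, terminating in any prescribed partite set of $Q_{s-1}$, are available. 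Routing $u$ through $Q_{s-1}$ (directly if $u\in V(Q_{s-1})$, otherwise via an arc $u\to w\in V(Q_{s-1})$ into a partite set different from $u$'s) and exiting into a suitable $U_j^{(s)}$ allows the total walk length modulo~$3$ to be adjusted so as to match $v$'s walk inside $Q_s$; Lemma~\ref{Lem:useful} then certifies the coexistence of the required walks.

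Part~(iii) requires the most care, because $\kappa(Q_{s-1})=3$ eliminates the residue flexibility inside $Q_{s-1}$. Encode the partite-set alignment by the permutation $\pi$ of $\{1,2,3\}$ with $U_l^{(s-1)}\subseteq V_{\pi(l)}$ (well-defined in the remaining non-trivial case), and set $\sigma(l)=\pi(l)-l\pmod 3$. Tracking imprimitivity offsets along a walk $u\to\cdots\to w\to u'\to\cdots\to x$ with $u\in U_a^{(s-1)}$, $w\in U_l^{(s-1)}$, and $v\in U_b^{(s)}$, all the length variables cancel and leave the matching constraint $c\equiv b+l-a+1\pmod 3$, where $c$ is the imprimitivity class of $u'$ and is subject to $c\not\equiv\pi(l)\pmod 3$. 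Hence a common $m$-step prey exists for some $m$ iff some $l\in\{1,2,3\}$ satisfies $\sigma(l)\not\equiv b-a+1\pmod 3$. The unusual condition is precisely ``$\pi$ is a cyclic rotation'', equivalently ``$\sigma$ is constant''; in the not-unusual case $\sigma$ attains all three residues, so the condition holds for every $(a,b)$, establishing the second claim of (iii). For the first claim, when $u\in V(D)-(V(Q_{s-1})\cup V(Q_s))$ lies in some $V_i$, the initial arc from $u$ can be sent into any of the two partite sets of $Q_{s-1}$ different from $V_i$, and a short arithmetic check shows that at least one of these two options produces a valid $l$ even in the unusual case. The main obstacle is this residue bookkeeping: one must carefully track four indices ($a$, $b$, $l$, and the choice of exit arc), exploit the cancellation that removes the walk lengths from the constraint, and verify the non-constancy/rotation dichotomy for $\sigma$ that precisely distinguishes the unusual from the not-unusual situations.
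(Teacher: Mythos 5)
Your parts (i) and (iii) are essentially sound. Part (i) is in fact easier than you make it: once $u$ and $v$ both lie outside some $V_j$ with $U_j^{(s)}\neq\emptyset$, every vertex of $U_j^{(s)}$ is already a one-step common prey, so the detour through long walks inside $Q_s$ is unnecessary (this is the paper's argument). For part (iii), your offset function $\sigma(l)=\pi(l)-l \pmod 3$ is a cleaner organization than the paper's: the paper argues by cases (a partite set of $Q_s$ not partite-related to any partite set of $Q_{s-1}$, versus a permutation normalized without loss of generality to a non-rotation), exhibits an $(x,y)$-directed walk of every length for one pair $x\in V(Q_{s-1})$, $y\in V(Q_s)$, and then invokes Lemma~\ref{Lem:useful}; your observation that $\sigma$ is either constant or surjective (since $\sum_{l}\sigma(l)\equiv 0\pmod 3$) packages the same mechanism uniformly, and your residue bookkeeping for the first claim of (iii) checks out, though the paper gets that claim by a much shorter direct one- or two-step common-prey argument. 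You do leave the case where $\pi$ is undefined (some partite set of $Q_{s-1}$ not partite-related to any partite set of $Q_s$) implicit, but it is genuinely the easy case.

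The genuine gap is in part (ii), in the sentence asserting that coprimality yields ``walks of every sufficiently large length, terminating in any prescribed partite set of $Q_{s-1}$.'' This is false whenever $Q_{s-1}$ is bipartite, i.e.\ $\kappa(Q_{s-1})\in\{2,4\}$: the parity of the walk length determines which partite set of $Q_{s-1}$ the walk ends in, so the length and the terminal partite set cannot be prescribed independently --- and this is exactly the step your routing argument leans on. The correct mechanism is subtler: the exit partite set is tied to the length parity, each exit partite set gives access to only two of the three classes $U_1^{(s)}, U_2^{(s)}, U_3^{(s)}$, and one must verify that varying the length within a fixed parity class still realizes every needed residue of the total length modulo $3$, which is where $\gcd(2,3)=1$ actually enters. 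The paper does this by treating the primitive and bipartite cases of $Q_{s-1}$ separately and, in the bipartite case, concatenating walks of length $2a$ in $Q_{s-1}$ with walks of length $3b+\alpha$ in $Q_s$ and invoking the Frobenius number of $\{2,3\}$. Your argument for (ii) is repairable along these lines, but as written the bipartite case does not go through.
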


\begin{proof}
 Let $D$ be a $k$-partite tournament with $k$-partition $(V_1,V_2,\ldots,V_k)$ for an integer $k \ge 2$.
   To show part (i), take two vertices $u_1$ and $u_2$ in $V(D) - V(Q_s)$.
  Then, since $\kappa(Q_s)=3$, $V_j$ does not contain any of $u_1$ and $u_2$ for some $j \in \{1,2,3\}$.
  By \eqref{eqn:bi}, $U_{j}^{(s)} \subseteq V_j$.
  Therefore each vertex in $U_{j}^{(s)}$ is a common prey of $u_1$ and $u_2$.
  Hence $u_1$ and $u_2$ are adjacent in $G$ by ($\ast$).
  Since $u_1$ and $u_2$ were arbitrarily chosen, (i) is true.

  To show parts (ii) and (iii), take $u_1 \in V(Q_s)$ and
  assume
  \[u_1 \in U_{i}^{(s)}
  \]
  for some $i \in \{1,2,3\}$.

 To show (ii),  suppose that $\kappa(Q_{s-1}) \neq 3$ and $Q_{s-1}$ is nontrivial, and take  $u_2 \in V(D) - V(Q_s)$.
  Then, by Corollary~\ref{cor:kappa}, $Q_{s-1}$ is either primitive or bipartite, so $Q_{s-1}$ has at least two partite sets.
   Therefore there exists an arc from $u_2$ to a vertex in $Q_{s-1}$.
We take an out-neighbor of $u_2$ that belongs to $Q_{s-1}$ and denote it by $\varphi_{u_2}$.

   We first consider the case in which $Q_{s-1}$ is primitive.
  We let $N= \exp(Q_{s-1}) +2$ and take an integer $m \ge N$.
 Since $Q_s$ is strongly connected, we may take a vertex $v$ in $Q_s$ such that there is a $(u_1,v)$-directed walk of length $m$.
  Since $Q_{s-1}$ has at least two partite sets, there is an in-neighbor $x \in V(Q_{s-1})$ of $v$.
  By the choice of $N$ and the case assumption that $Q_{s-1}$ is primitive, there is a $(\varphi_{u_2},x)$-directed walk $W$ of length $m-2$.
  Then $u_2  \to W  \to v$ is a $(u_2,v)$-directed walk of length $m$.
  Therefore $v$ is an $m$-step common prey of $u_1$ and $u_2$.
  Hence $u_1$ and $u_2$ are adjacent in $G$ by ($\ast$).

Now we consider the case in which $Q_{s-1}$ is bipartite.
Then, for each nonnegative integer $a$, there exists a vertex $y_a \in V(Q_{s-1})$
such that there is a $(\varphi_{u_2},y_a)$-directed walk of length $2a$.
Since $Q_{s-1}$ is bipartite, $\varphi_{u_2}$ and $y_a$ belong to the same partite set, say $Y$, in $Q_{s-1}$.
Since $\kappa(Q_s)=3$, $Q_s$ has three partite sets by Corollary~\ref{cor:kappa}.
Then, since $y_a$ belongs to $Y$ for any nonnegative integer $a$, there exists a vertex $z \in V(Q_s)$ such that $(y_a,z)$ is an arc for any nonnegative integer $a$.
Take a vertex $w \in U_{i}^{(s)}$.
 For some $\alpha \in \{1,2,3\}$ and each nonnegative integer $b$, there exists a $(z,w)$-directed walk of length $3b+\alpha$.
Hence there exists a $(u_2,w)$-directed walk of length $2a+3b+2+\alpha$ for any nonnegative integers $a$ and $b$.
Since the Frobenius number of the set $\{2,3\}$ is $1$, it is guaranteed that there is a $(u_2,w)$-directed walk of length $\ell$ for any $\ell \ge 4+\alpha$ (recall that the Frobenius number of the set $\{p,q\}$ for relatively prime $p$ and $q$ is the largest integer $b$ such that $b$ cannot be written as $ps+qt$ for any nonnegative integers $s$ and $t$ and is known to be $pq-p-q$).
Since  $u_1, w\in U_{i}^{(s)}$ and $\kappa(Q_s)=3$, there exists a $(u_1,w)$-directed walk of length $3c$ for each positive integer $c$.
Therefore we have shown that there exist a $(u_1,w)$-directed walk and a $(u_2,w)$-directed walk of the same length.
  Hence $u_1$ and $u_2$ are adjacent in $G$ by ($\ast$).
  Since $u_1$ and $u_2$ were arbitrarily chosen and are adjacent in $G$ in each case, (ii) is true.

  To show (iii), suppose $\kappa(Q_{s-1})=3$.
  Then $Q_{s-1}$ has three partite sets by Corollary~\ref{cor:kappa}, so there exists a partite set $X$ of $Q_{s-1}$ such that $X \not\subseteq X_{i+1} \cup X_{i+2}$ where $X_j$ is the partite set of $D$ including $U_j^{(s)}$ for each $j \in \{1,2,3\}$ (recall $u_1 \in U_{i}^{(s)}$ and note that $X_j=V_j$ for $j=1,2,3$).
  For a nonnegative integer $a$ and for each $j=1,2,3$, we identify $U^{(s)}_{3a+j}$ with $U^{(s)}_{j}$ and $X_{3a+j}$ with $X_{j}$.

Take $u_2 \in V(D) - \left(V(Q_{s-1}) \cup V(Q_s)\right)$.
  If $u_2 \not\in X_{i+1}$, then each vertex in $U_{i+1}^{(s)}$ is a common prey of $u_1$ and $u_2$ and so, by ($\ast$), $u_1$ and $u_2$ are adjacent in $G$.
  Suppose $u_2 \in X_{i+1}$.
  Then there is an arc from $u_2$ to each vertex in $X$.
  Since $X \not\subseteq X_{i+1} \cup X_{i+2}$, there is an arc from any vertex in $X$ to any vertex in  $U_{i+2}^{(s)}$, and so each vertex in $U_{i+2}^{(s)}$ is a $2$-step common prey of $u_1$ and $u_2$.
  Thus $u_1$ and $u_2$ are adjacent in $G$ by ($\ast$).
  Since $u_1$ and $u_2$ were arbitrarily chosen, each vertex in $V(D) - \left(V(Q_{s-1}) \cup V(Q_s)\right)$ and each vertex in $V(Q_s)$ are adjacent in $G$.

  Now, to show the ``further'' part, suppose that the digraph induced by $V(Q_{s-1}) \cup V(Q_s)$ is not unusual.
  We will claim that the digraph induced by $V(Q_{s-1}) \cup V(Q_s)$ satisfies the hypothesis of Lemma~\ref{Lem:useful} in each of the following cases.

{\it Case 1.}
Suppose that there exists a partite set $Y$ of $Q_s$ which is not partite-related to any partite set of $Q_{s-1}$.
Take a positive integer $\ell$, a vertex $x \in V(Q_{s-1})$, and a vertex $y \in Y$.
Since $Q_{s-1}$ is a nontrivial strong component, there exists an $(x,z)$-directed walk of length $\ell-1$ in $Q_{s-1}$ for some vertex $z \in V(Q_{s-1})$.
Since any partite set of $Q_{s-1}$ and $Y$ are not partite-related,
there exists an arc from $z$ to $y$, which results in an $(x,y)$-directed walk of length $\ell$.

{\it Case 2.}
Suppose that any partite set of $Q_s$ is included in the partite set of $D$ including some partite set of $Q_{s-1}$.
Then $V(Q_{s-1}) \subseteq V_1 \cup V_2 \cup V_3$.
Since the digraph induced by $V(Q_{s-1}) \cup V(Q_s)$ is not unusual, without loss of generality, we may assume that  $U_{1}^{(s-1)} \subseteq V_1$, $U_{2}^{(s-1)} \subseteq V_3$, and $U_{3}^{(s-1)} \subseteq V_2$.
Take $x \in U_{1}^{(s-1)}$ and $y \in U_{2}^{(s)}$.
Then $(x,y) \in A(D)$.
Since $\kappa(Q_{s-1})=3$, there exists an $(x,x)$-directed walk $P_{a}$ of length $3a$ in $Q_{s-1}$ for each nonnegative integer $a$.
Take vertices $z_1 \in U_{2}^{(s-1)}$ and $z_2 \in U_{1}^{(s)}$.
Then $(x,z_1)$, $(z_1,y)$, $(z_1,z_2), (z_2,y) \in A(D)$.
Therefore $P_a \to y$, $P_a \to z_1 \to y$, and $P_a \to z_1 \to z_2 \to y$ are $(x,y)$-directed walks of length $3a+1$, $3a+2$, and $3a+3$, respectively, for each nonnegative integer $a$.
Hence there is a directed walk of length $\ell$ from $x$ to $y$ for each positive integer $\ell$.
Consequently, we have shown that the digraph induced by $V(Q_{s-1}) \cup V(Q_s)$ satisfies the hypothesis of Lemma~\ref{Lem:useful} in each case.
Thus, there exists a positive integer $L$ such that for any vertex $x$ in $Q_{s-1}$ and for any vertex $y$ in $Q_s$, there exists an $(x,y)$-directed walk of length $\ell$ for each $\ell \ge L$.

Take $u \in V(Q_{s-1})$ and $v \in V(Q_s)$.
Since $Q_s$ is strong, there exists a $(v,w)$-directed walk of length $L$ for some vertex $w \in V(Q_s)$.
By the previous claim, there exists a $(u,w)$-directed walk of length $L$.
Hence $u$ and $v$ are adjacent in $G$ by ($\ast$).
Since $u$ and $v$ were arbitrarily chosen, we have shown the ``further'' part of (iii).
\end{proof}

\begin{Lem}\label{claim:a}
 Let $D$ be a $k$-partite tournament having $k$-partition $(V_1,V_2, \ldots, V_k)$ and ordered strong components $Q_1,Q_2,\ldots, Q_s$ with $\kappa(Q_s)=3$ for some integers $k,s \ge 2$ and let $G$ be the limit of the graph sequence $\{C^m(D)\}_{m=1}^{\infty}$.
  Further, let $D_1$ be the subdigraph induced by $\bigcup_{i=p}^{q}V(Q_i) \cap V_j$ for some $p$, $q$ with $1 \le p \le q < s$ and some $j \in \{1,2,3\}$.
\begin{itemize}
  \item[(a)] every vertex in $D_1$ and every vertex in $U_{j}^{(s)} \cup U_{j+1}^{(s)}$ are adjacent in $G$ (identify $U_{i}^{(s)}$ with $U_{r}^{(s)}$ where $r \in \{1,2,3\}$ and $r \equiv i \pmod 3$);
  \item[(b)] Suppose that $\bigcup_{i=p}^{q}V(Q_i) \subseteq V_j$ and $\bigcup_{i=q+1}^{s-1}V(Q_i) \subseteq V_{j+1}$ (identify $V_4$ with $V_1$). Then, in $G$, there is no edge between a vertex in $\bigcup_{i=p}^{q}{V(Q_{i})}$ and a vertex in $U_{j+2}^{(s)}$.
\end{itemize}
\end{Lem}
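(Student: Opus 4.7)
The plan is to treat (a) and (b) by distinct methods: (a) reduces to a one-line application of ($\ast$), while (b) requires showing non-adjacency for every $m$, for which ($\ast$) is useless.

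For part (a), I fix $u \in V(D_1) \subseteq V_j \setminus V(Q_s)$ and $v \in U_j^{(s)} \cup U_{j+1}^{(s)}$ and look for a common out-neighbor. Since $Q_s$ is the last strong component, every arc between $V(Q_s)$ and the rest of $D$ points into $V(Q_s)$, so the multipartite hypothesis together with $u \in V_j$ yields $u \to U_{j+1}^{(s)} \cup U_{j+2}^{(s)}$. The cyclic structure $U_j^{(s)} \to U_{j+1}^{(s)} \to U_{j+2}^{(s)} \to U_j^{(s)}$ inside $Q_s$ then supplies a common out-neighbor in $U_{j+1}^{(s)}$ when $v \in U_j^{(s)}$ and in $U_{j+2}^{(s)}$ when $v \in U_{j+1}^{(s)}$. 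An appeal to ($\ast$) finishes (a).

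For part (b), I pick $u \in \bigcup_{i=p}^q V(Q_i)$ and $w \in U_{j+2}^{(s)}$ and suppose for contradiction that some $z$ is an $m$-step common prey of $u$ and $w$. Because $w \in V(Q_s)$ can only reach vertices in $V(Q_s)$, we have $z \in U_r^{(s)}$ with $r \equiv j+2+m \pmod 3$, courtesy of the period-$3$ structure of $Q_s$. My target is to show that any $(u,z)$-walk of length $m$ forces $r \in \{j+m,\, j+m+1\} \pmod 3$, contradicting the constraint above.

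The decisive step is the bound $t \le 2$, where $t$ denotes the first index at which a walk $y_0 = u, y_1, \ldots, y_m = z$ enters $V(Q_s)$. The vertices $y_0, \ldots, y_{t-1}$ lie in $\bigcup_{l=p}^{s-1} V(Q_l) \subseteq V_j \cup V_{j+1}$ and alternate between $V_j$ (even indices) and $V_{j+1}$ (odd indices) by the multipartite property. If $t \ge 2$, then $y_1 \in V_{j+1} \cap \bigcup_{l=p}^{s-1} V(Q_l) = \bigcup_{l=q+1}^{s-1} V(Q_l)$ sits in some $Q_{l_1}$ with $l_1 \ge q+1$, and $y_2$, being in a later component, must lie in $\bigcup_{l=q+1}^{s-1} V(Q_l) \cup V(Q_s) \subseteq V_{j+1} \cup V(Q_s)$; alternation demands $y_2 \in V_j$, so $y_2 \in V(Q_s)$, i.e., $t = 2$. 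A short case check of the partite set of $y_t$ (either $U_{j+1}^{(s)}$ or $U_{j+2}^{(s)}$ when $t=1$; either $U_j^{(s)}$ or $U_{j+2}^{(s)}$ when $t=2$) yields $r \in \{j+m,\, j+m+1\} \pmod 3$ in every situation. I expect the $t \le 2$ observation to be the main obstacle; the concluding mod-$3$ bookkeeping is routine.
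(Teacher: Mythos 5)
Your proof is correct and takes essentially the same approach as the paper's: part (a) via a one-step common prey in $U_{j+1}^{(s)}$ or $U_{j+2}^{(s)}$, and part (b) by locating where a hypothetical $(u,z)$-walk first enters $Q_s$ and deriving a mod-$3$ contradiction. The paper simply inspects the first two vertices $w_1,w_2$ of the walk rather than naming the entry time $t$ and proving $t\le 2$, but the case split and the residue bookkeeping are identical.
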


\begin{proof}
To show (a), take a vertex $u$ in $D_1$ and a vertex $v$ in $U_{j}^{(s)} \cup U_{j+1}^{(s)}$.
  If $v \in U_{j}^{(s)}$ (resp.\ $U_{j+1}^{(s)}$), then each vertex in $U_{j+1}^{(s)} \subseteq V_{j+1}$ (resp.\ $U_{j+2}^{(s)} \subseteq V_{j+2}$) is a common prey of $u$ and $v$ (note that, by \eqref{eqn:bi}, $U_{i}^{(s)} \subseteq V_{i}$ for each $i =1,2,3$).
  Therefore $u$ and $v$ are adjacent in $G$ by ($\ast$).

  To show (b), suppose that $\bigcup_{i=p}^{q}V(Q_i) \subseteq V_j$ and $\bigcup_{i=q+1}^{s-1}V(Q_i) \subseteq V_{j+1}$.
  To the contrary, assume that a vertex $u \in \bigcup_{i=p}^{q}{V(Q_{i})} \subseteq V_j$ and a vertex $v \in U_{j+2}^{(s)}$ have an $m$-step common prey $z$ for some positive integer $m$.
  We note that $v$ belongs to the last strong component $Q_s$ and $U_{1}^{(s)},U_{2}^{(s)}$, and $U_{3}^{(s)}$ are the sets of imprimitivity of $Q_s$.
  Therefore
  \begin{equation}\label{eqn:zz}
    z \in U_{j+2+m}^{(s)}
  \end{equation}
     and there is an $(u,z)$-directed walk $W$ of length $m$ in $D$.
  Let $w_1$ and $w_2$ be the vertices located right next to $u$ and $w_1$, respectively, on $W$.
  Since $\bigcup_{i=p}^{q}{V(Q_{i})}  \subseteq V_j$, there is no arc between any two vertices in $\bigcup_{i=p}^{q}{V(Q_{i})}$ and so $w_1 \in \bigcup_{i=q+1}^{s}{V(Q_{i})}$.
  If $w_1 \in V(Q_s)$, then $w_1 \in  U_{j+1}^{(s)} \cup U_{j+2}^{(s)}$ since $u \in V_j$ (recall $\kappa(Q_s)=3$).
  Now consider the case $w_1 \in \bigcup_{i=q+1}^{s-1}{V(Q_{i})} \subseteq V_{j+1}$.
   Then $w_2 \in V(Q_s)$ since there is no arc between any two vertices in $\bigcup_{i=q+1}^{s-1}{V(Q_{i})}$.
  Since $w_1 \in V_{j+1}$, $w_2 \in  U_{j}^{(s)} \cup U_{j+2}^{(s)}$.
  Then $z$ belongs to $U_{j+m}^{(s)}$ or $U_{j+1+m}^{(s)}$ in each case, which contradicts \eqref{eqn:zz}.
 Thus there is no edge between a vertex in $\bigcup_{i=p}^{q}{V(Q_{i})}$ and a vertex in $U_{j+2}^{(s)}$ in $G$.
\end{proof}

\begin{Thm}\label{thm:s-1trivial}
  Suppose that a multipartite tournament $D$ has ordered strong components $Q_1,Q_2,\ldots, Q_s$ with trivial $Q_{s-1}$ and $\kappa(Q_s)=3$ for some integer $s \ge 2$.
  Then the limit of the graph sequence $\{C^m(D)\}_{m=1}^{\infty}$ is isomorphic to $G_3$ given in Figure~\ref{fig:graphs}.
\end{Thm}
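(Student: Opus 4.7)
The plan is to define seven explicit subsets of $V(D)$ that will play the roles of the cliques $K^{(1)},\dots,K^{(7)}$ in $G_3$, and then to verify all adjacencies and non-adjacencies prescribed by $M_3$ using the lemmas of Sections~\ref{sec:2} and~\ref{sec:3}.

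I would first invoke Corollary~\ref{Prop:converge} to conclude that the subgraph of $G$ induced by $V(Q_s)$ is $K[U_1^{(s)}]\cup K[U_2^{(s)}]\cup K[U_3^{(s)}]$, assigning these three cliques the roles of $K^{(5)}, K^{(6)}, K^{(7)}$. By Lemma~\ref{lem:clique}(i), $W:=V(D)-V(Q_s)$ is a clique in $G$, so any partition of $W$ produces pairwise adjacent subcliques. I partition $W$ by setting
\[
K^{(1)}:=\{v\in W:\ v\text{ is adjacent in }G\text{ to every vertex of }V(Q_s)\},
\]
together with $K^{(2)}:=(V_1\cap W)\setminus K^{(1)}$, $K^{(3)}:=(V_3\cap W)\setminus K^{(1)}$, and $K^{(4)}:=(V_2\cap W)\setminus K^{(1)}$; any of these subsets may be empty, which is allowed by the caption of Figure~\ref{fig:graphs}. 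To see that this exhausts $W$, I would note that any $v\in W\cap V_j$ with $j\ge 4$ sits in some $Q_i$ ($i\le s-1$) whose partite set containing $v$ is not partite-related to any partite set of $Q_s$ (because $V(Q_s)\subseteq V_1\cup V_2\cup V_3$ by \eqref{eqn:bi}), and so Proposition~\ref{prop:not partite-related} applied with $t=i$ forces $v\in K^{(1)}$.

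Next I would check the adjacencies dictated by $M_3$. Edges inside $W$ are immediate, and adjacency of $K^{(1)}$ to each of $K^{(5)},K^{(6)},K^{(7)}$ is built into the definition of $K^{(1)}$. For $v\in K^{(2)}\subseteq V_1\cap W$, Lemma~\ref{claim:a}(a) applied with $p=1$, $q=s-1$, $j=1$ supplies adjacency of $v$ to every vertex of $U_1^{(s)}\cup U_2^{(s)}=K^{(5)}\cup K^{(6)}$; cyclically shifting $j$ handles $K^{(3)}$ (adjacent to $K^{(5)}\cup K^{(7)}$) and $K^{(4)}$ (adjacent to $K^{(6)}\cup K^{(7)}$), matching the off-diagonal $J$-blocks of $M_3$.

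The main obstacle is the three non-adjacencies: under the cyclic correspondence $K^{(2)}\leftrightarrow V_1$, $K^{(4)}\leftrightarrow V_2$, $K^{(3)}\leftrightarrow V_3$, each $K^{(j+1)}$ must be entirely disconnected from $U_{j+2}^{(s)}$ in $G$. For $v\in K^{(2)}$, the definition of $K^{(1)}$ yields some $z_0\in V(Q_s)$ non-adjacent to $v$ in $G$, and Lemma~\ref{claim:a}(a) forces $z_0\in U_3^{(s)}$. To promote this to non-adjacency of $v$ with every $z\in U_3^{(s)}$, I plan to run an imprimitivity-symmetry argument: since $Q_s$ is strongly connected with $\kappa(Q_s)=3$, for all sufficiently large $m$ every vertex of $U_3^{(s)}$ reaches every vertex of $U_{3+m}^{(s)}$ in exactly $m$ steps inside $Q_s$, so $v$ has a common $m$-step prey with $z_0$ if and only if it has one with $z$ for such $m$; Proposition~\ref{prop:adjacent} then transfers the equivalence to the limit $G$. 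The triviality hypothesis on $Q_{s-1}$ enters only to prevent direct use of Lemma~\ref{lem:clique}(ii) and (iii): the single vertex of $Q_{s-1}$ is classified by the same scheme as any other vertex of $W$, and in fact Lemma~\ref{claim:a}(b) applied with $p=q=s-1$ (so that $\bigcup_{i=s}^{s-1}V(Q_i)=\emptyset$ is vacuously contained in $V_{j+1}$) supplies the non-adjacency for that vertex directly. Once all prescribed adjacencies and non-adjacencies are established, the incidence pattern of the seven cliques matches $M_3$, giving $G\cong G_3$.
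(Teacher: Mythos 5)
Your proof is correct, but it is organized quite differently from the paper's. The paper proves this theorem by a bottom-up case analysis: it locates threshold indices $t$, $t_1$, $t_2$ among the ordered strong components, names an explicit vertex set for each clique $K^{(i)}$, and then verifies the required adjacencies and non-adjacencies case by case (whether $\bigcup_{i=t+1}^{s-1}V(Q_i)$ lies in a single partite set, whether $Q_{t_1}$ is trivial, which partite set $v_{t_1}$ lies in, and so on), leaning on Lemma~\ref{claim:a} and parts (ii)--(iii) of Lemma~\ref{lem:clique}. You instead define $K^{(1)}$ intrinsically as the set of vertices of $W=V(D)-V(Q_s)$ adjacent in $G$ to all of $V(Q_s)$, sort the rest of $W$ by partite set, and reduce everything to two facts: Lemma~\ref{claim:a}(a) for the forced adjacencies, and a homogeneity argument showing that the edges between a fixed vertex of $W$ and a class $U_i^{(s)}$ are all present or all absent. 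That homogeneity step is sound but is the load-bearing step, so spell it out: since $Q_s$ is the last strong component and, for a tripartite tournament with $\kappa(Q_s)=3$, one has $U_i^{(s)} \to U_{i+1}^{(s)}$ for each $i$ (any two such vertices lie in different partite sets, and the arc between them must respect the sets of imprimitivity), the $m$-step prey set of every vertex of $U_i^{(s)}$ is exactly $U_{i+m}^{(s)}$ for every $m \ge 1$; hence $v\in W$ has a common $m$-step prey with one vertex of $U_i^{(s)}$ if and only if it has one with every vertex of $U_i^{(s)}$, and Proposition~\ref{prop:adjacent} transfers this equivalence to the limit. Your route is shorter and, as you yourself observe, never uses the triviality of $Q_{s-1}$, so it in fact proves the more general Theorem~\ref{thm:unusual} (minus its ``further'' clause); what the paper's longer analysis buys is an explicit description of which strong components populate each clique and the finer information about when $K^{(2)}$ is empty.
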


\begin{proof}
Let $G$ be the limit of the graph sequence $\{C^m(D)\}_{m=1}^{\infty}$.
By Lemma~\ref{lem:clique}(i) and Corollary~\ref{Prop:converge},
\begin{itemize}
  \item[(a)] $V(D) - V(Q_s)$ forms a clique in $G$ and $V(Q_s)$ induces $\bigcup_{i=1}^{3}{K{\left[U_{i}^{(s)}\right]}}$ in $G$.
\end{itemize}

  Since $Q_{s-1}$ is trivial, the digraph induced by $V(Q_{s-1}) \cup V(Q_s)$ is not unusual.
Let $V(Q_{s-1}) = \{v_{s-1}\}$.
If there is an integer $t \in \{1,\ldots, s-1\}$ such that a partite set of $Q_t$ is not partite-related to any partite set of $Q_s$, then we take $t$ as the largest among such integers.
If such a $t$ does not exist, that is, $D$ is a tripartite tournament, then we let $t=0$.
If $t \ge 1$, then, by Proposition~\ref{prop:not partite-related},
\begin{itemize}
  \item[(b)] $\bigcup_{i=1}^{t}{V(Q_i)}$ forms a clique in $G$ and any vertex in $\bigcup_{i=1}^{t}V(Q_i)$ and any vertex in $\bigcup_{i=t+1}^{s}V(Q_i)$ are adjacent in $G$.
\end{itemize}
Especially, if $t=s-1$, then $G \cong G_3$ where $K^{(2)}$, $K^{(3)}$, and $K^{(4)}$ do not exist and
\[
K^{(1)}=K\left[\bigcup_{i=1}^{s-1}V(Q_i)\right], \quad K^{(5)}=K\left[U_{1}^{(s)}\right], \quad K^{(6)}=K\left[U_{2}^{(s)}\right], \quad \mbox{and} \quad K^{(7)}=K\left[U_{3}^{(s)}\right].
\]
Now assume $t < s-1$.
Then every partite set of $Q_{i}$ is partite-related to some partite set of $Q_s$ for each $i= t+1, \ldots, s-1$.
Recall that, by \eqref{eqn:bi},  $V_i \cap V(Q_s)=  U_{i}^{(s)}$ for each $i=1,2,3$ where $(V_1,V_2,\ldots,V_k)$ is a $k$-partition of $D$.
Without loss of generality, we may assume that
\begin{equation}\label{eqn:s-1}
v_{s-1} \in V_2.
\end{equation}
Suppose that $\bigcup_{i=t+1}^{s-1}V(Q_i) \subseteq V_j$ for some $j \in \{1,2,3\}$.
Then $j=2$ and, by Lemma~\ref{claim:a},
\begin{itemize}
  \item every vertex in $\bigcup_{i=t+1}^{s-1}V(Q_i)$ and every vertex in $U_{2}^{(s)} \cup U_{3}^{(s)}$ are adjacent in $G$;
  \item there is no edge between a vertex in $\bigcup_{i=t+1}^{s-1}{V(Q_{i})}$ and a vertex in $U_{1}^{(s)}$ in $G$.
\end{itemize}
Therefore, by (a) and (b), $G \cong G_3$ where $K^{(2)}$ and $K^{(3)}$ do not exist and
\[
K^{(1)}=K\left[\bigcup_{i=1}^{t}V(Q_i)\right], \quad
K^{(4)}=K\left[\bigcup_{i=t+1}^{s-1}V(Q_i)\right],
\]
\[
K^{(5)}=K\left[U_{1}^{(s)}\right], \quad K^{(6)}=K\left[U_{2}^{(s)}\right], \quad \mbox{and} \quad K^{(7)}=K\left[U_{3}^{(s)}\right].
\]

Now consider the case in which $\bigcup_{i=t+1}^{s-1}V(Q_i) \not\subseteq V_i$ for any $i \in \{1,2,3\}$.
Then there is an integer $t_1$ such that $t < t_1 \le s-1$ and $V(Q_{t_1}) \not \subseteq V_2$.
We may assume that $t_1$ is the largest integer among such integers.
By \eqref{eqn:s-1}, $t_1 \neq s-1$ and $\bigcup_{i=t_1+1}^{s-1}{V(Q_{i})\subseteq V_2}$.
Then, by Lemma~\ref{claim:a},
\begin{itemize}
  \item[(c1)] every vertex in $\bigcup_{i=t_1+1}^{s-1}V(Q_i)$ and every vertex in $U_{2}^{(s)} \cup U_{3}^{(s)}$ are adjacent in $G$;
  \item[(d1)] there is no edge between a vertex in $\bigcup_{i=t_1+1}^{s-1}{V(Q_{i})}$ and a vertex in $U_{1}^{(s)}$ in $G$.
\end{itemize}

It remains to check the adjacency of a vertex belonging to $\bigcup_{i=t+1}^{t_1}{V(Q_{i})}$ and a vertex belonging to $V(Q_s)$ in $G$.

{\it Case 1.} $Q_{t_1}$ is nontrivial.
Suppose that the digraph induced by $V(Q_{t_1}) \cup V(Q_s)$ is not unusual.
Then any vertex in $\bigcup_{i=t+1}^{t_1}{V(Q_{i})}$ and any vertex in $V(Q_s)$ are adjacent in $G$ by Lemma~\ref{lem:clique}(ii) and (iii) applied to the subdigraph induced by $\bigcup_{i=t+1}^{t_1}{V(Q_{i})} \cup V(Q_s)$, which is a multipartite tournament with ordered strong components $Q_{t+1},\ldots,Q_{t_1},Q_{s}$.
Now suppose that the digraph induced by $V(Q_{t_1}) \cup V(Q_s)$ is unusual.
Then
\[\kappa(Q_{t_1})=3\]
 and so, if $t+1 \le t_1-1$, each vertex in $\bigcup_{i=t+1}^{t_1-1}{V(Q_{i})}$ and each vertex in $V(Q_s)$ are adjacent in $G$ by Lemma~\ref{lem:clique}(iii) applied to the subdigraph induced by $\bigcup_{i=t+1}^{t_1}{V(Q_{i})} \cup V(Q_s)$, which is a multipartite tournament with ordered strong components $Q_{t+1},\ldots,Q_{t_1},Q_{s}$.
If $t+1 = t_1$, then $\bigcup_{i=t+1}^{t_1-1}{V(Q_{i})} = \emptyset$.
Therefore, in either case, any vertex in $\bigcup_{i=t+1}^{t_1}{V(Q_{i})}$ and any vertex in $V(Q_s)$ are adjacent in $G$ as long as any vertex in $V(Q_{t_1})$ and any vertex in $V(Q_s)$ are adjacent in $G$, which is true as shown below.

 For each $i=1,2,3$, let $X_i$ be the partite set of $D$ including $U_i^{(s)}$ (note that $X_i=V_i$) and take a vertex $x_i$ in $U_{i}^{(t_1)}$ and a vertex $y_{i}$ in $U_{i}^{(s)}$.
 Now fix $i \in \{1,2,3\}$.
   Without loss of generality, we may assume $U_{i}^{(t_1)} \subseteq X_i$.
  For a nonnegative integer $a$, we identify $U^{(t_1)}_{3a+i}$ with $U^{(t_1)}_{i}$, $U^{(s)}_{3a+i}$ with $U^{(s)}_{i}$, and $X_{3a+i}$ with $X_{i}$.
Then, by Lemma~\ref{claim:a}(a) applied to the digraph induced by $V(Q_{t_1}) \cup V(Q_s)$ and the subdigraph induced by $V(Q_{t_1}) \cap V_i $, each vertex in $U_{i}^{(t_1)}$ and each vertex in $U_{j}^{(s)}$ are adjacent in $G$ if $j=i$ or $i+1$.
Accordingly, it remains to consider the case $j=i+2$.

The fact $\kappa(Q_{s})=\kappa(Q_{t_1})=3$ guarantees that there exist a $(y_{i+2},y_1)$-directed walk of length $5-i$ in $Q_{s}$ (identify $y_4$ and $y_5$ with $y_1$ and $y_2$, respectively) and an $(x_i,x_3)$-directed walk of length $3-i$ in $Q_{t_1}$.
Since $v_{s-1} \in V_2$, there are arcs $(x_3, v_{s-1})$ and $(v_{s-1}, y_1)$ in $D$.
Hence there exists an $(x_i, y_1)$-directed walk of length $5-i$ in $D$ and so $y_1$ is a $(5-i)$-step common prey of $x_i$ and $y_{i+2}$.
Since $i$, $x_i$, and $y_{i+2}$ were chosen arbitrarily, any vertex in $U_{i}^{(t_1)}$ and any vertex in $U_{i+2}^{(s)}$ are adjacent in $G$ by ($\ast$).
Therefore, by (a), (b), (c1), and (d1), $G \cong G_3$ where $K^{(2)}$ and $K^{(3)}$ do not exist and
\[
K^{(1)}=K\left[\bigcup_{i=1}^{t_1}V(Q_i)\right], \quad
K^{(4)}=K\left[\bigcup_{i=t_1+1}^{s-1}V(Q_i)\right],
\]
\[
K^{(5)}=K\left[U_{1}^{(s)}\right], \quad K^{(6)}=K\left[U_{2}^{(s)}\right], \quad \mbox{and} \quad K^{(7)}=K\left[U_{3}^{(s)}\right].
\]

{\it Case 2.} $Q_{t_1}$ is trivial.
Let $V(Q_{t_1}) = \{v_{t_1}\}$.
Take $u \in \bigcup_{i=t+1}^{t_1}{V(Q_i)}$ and $v \in V(Q_{s})$.
By the choice of $t$, $u \in X_j$ for some $j \in \{1,2,3\}$.
Then the subdigraph induced by $\{u\} \cup V(Q_s)$ is a multipartite tournament with ordered strong component $\{u\}$ and $Q_s$.
Since $\{u\} \subseteq X_j$, it follows from Lemma~\ref{claim:a}(a) that $u$ and $v$ are adjacent if $v \in X_{j} \cup X_{j+1}$.
Now we assume that $v \in X_{j+2}$.
By~\eqref{eqn:s-1},
\begin{equation}\label{eqn:from_s-1}
v_{s-1} \to U_1^{(s)} \cup U_3^{(s)}.
\end{equation}

{\it Subcase 2-1.} $v_{t_1} \in X_3$.
Then
\begin{equation}\label{eqn:from_t_1}
v_{t_1} \to v_{s-1} \quad \mbox{and} \quad v_{t_1} \to U_2^{(s)}.
\end{equation}
If $j \neq 3$, then $u \in \bigcup_{i=t+1}^{t_1-1}{V(Q_i)}$ since $v_{t_1} \in V_3$ and so
\begin{equation}\label{eqn:t_1}
u \to v_{t_1}.
\end{equation}
Suppose $j=1$.
Then $u \in X_1$ and $v \in X_3$.
By \eqref{eqn:from_t_1} and \eqref{eqn:t_1}, $u \to v_{t_1} \to U_{2}^{(s)}$.
Since $\kappa(Q_{s})=3$, we have $v \to U_{1}^{(s)} \to U_{2}^{(s)}$ and so each vertex in $U_{2}^{(s)}$ is a $2$-step common prey of $u$ and $v$.
Suppose $j=2$.
Then $u \in X_2$ and $v \in X_1$.
By \eqref{eqn:from_s-1}, \eqref{eqn:from_t_1}, and \eqref{eqn:t_1}, $u \to v_{t_1} \to v_{s-1} \to U_{1}^{(s)}$.
Since $\kappa(Q_{s})=3$, we have $v \to U_{2}^{(s)} \to U_{3}^{(s)} \to U_{1}^{(s)}$ and so each vertex in $U_{1}^{(s)}$ is a $3$-step common prey of $u$ and $v$.
Suppose $j=3$.
Then $u \in X_3$ and $v \in X_2$.
By \eqref{eqn:s-1} and \eqref{eqn:from_s-1}, $u \to v_{s-1} \to U_{1}^{(s)}$.
Since $\kappa(Q_{s})=3$, we have $v \to U_{3}^{(s)} \to U_{1}^{(s)}$ and so each vertex in $U_{1}^{(s)}$ is a $2$-step common prey of $u$ and $v$.
Consequently, $u$ and $v$ are adjacent in $G$ by ($\ast$) if $v \in X_{j+2}$.
Since $u$ and $v$ were arbitrarily chosen, we may conclude that any vertex in $\bigcup_{i=t+1}^{t_1}{V(Q_i)}$ and any vertex in $V(Q_{s})$ are adjacent in $G$.
Therefore, by (a), (b), (c1), and (d1), $G \cong G_3$ where $K^{(2)}$ and $K^{(3)}$ do not exist and
\[
K^{(1)}=K\left[\bigcup_{i=1}^{t_1}V(Q_i)\right], \quad
K^{(4)}=K\left[\bigcup_{i=t_1+1}^{s-1}V(Q_i)\right],
\]
\[
K^{(5)}=K\left[U_{1}^{(s)}\right], \quad K^{(6)}=K\left[U_{2}^{(s)}\right], \quad \mbox{and} \quad K^{(7)}=K\left[U_{3}^{(s)}\right].
\]

{\it Subcase 2-2.} $v_{t_1} \in X_1$.
Let $t_2$ be the smallest nonnegative integer less than $t_1$ such that $\bigcup_{i=t_2+1}^{t_1}{V(Q_{i})\subseteq X_1}$.
By the choice of $t$, $t \le t_2$.
Then, by Lemma~\ref{claim:a},
\begin{itemize}
  \item[(c2)] every vertex in $\bigcup_{i=t_2+1}^{t_1}V(Q_i)$ and every vertex in $U_{1}^{(s)} \cup U_{2}^{(s)}$ are adjacent in $G$.
   \item[(d2)] there is no edge between a vertex in $\bigcup_{i=t_2+1}^{t_1}{V(Q_{i})}$ and a vertex in $U_{3}^{(s)}$ in $G$.
 \end{itemize}

If $t_2=t$, by (a), (b), (c1), (d1), (c2), and (d2), $G \cong G_3$ where
\[
K^{(1)}=K\left[\bigcup_{i=1}^{t}V(Q_i)\right], \quad
K^{(3)}=K\left[\bigcup_{i=t+1}^{t_1}V(Q_i)\right], \quad
K^{(4)}=K\left[\bigcup_{i=t_1+1}^{s-1}V(Q_i)\right],
\]
\[
K^{(5)}=K\left[U_{1}^{(s)}\right], \quad K^{(6)}=K\left[U_{2}^{(s)}\right], \quad \mbox{and} \quad K^{(7)}=K\left[U_{3}^{(s)}\right].
\]

Suppose $t_2 >t$.
Take a vertex $u \in \bigcup_{i=t+1}^{t_2}{V(Q_{i})}$ and a vertex $v \in V(Q_s)$.
If $u \not \in X_1 \cup X_2 \cup X_3$, then there is an arc from $u$ to every vertex in $V(Q_s)$, and so $u$ and $v$ are adjacent in $G$ by ($\ast$).
Suppose $u  \in X_j$ for some $j \in \{1,2,3\}$.
  If $v \in U_{j}^{(s)}$ (resp.\ $U_{j+1}^{(s)}$), then each vertex in $U_{j+1}^{(s)} \subseteq X_{j+1}$ (resp.\ $U_{j+2}^{(s)} \subseteq X_{j+2}$) is a common prey of $u$ and $v$.
  Therefore $u$ and $v$ are adjacent in $G$ by ($\ast$) if $u \in X_j$ and $v \in U_{j}^{(s)} \cup U_{j+1}^{(s)}$ for some $j \in \{1,2,3\}$.

Now consider the case in which $u \in X_j$ and $v \in U_{j+2}^{(s)}$ for some $j \in \{1,2,3\}$.
Suppose $j=1$.
Then $v \to U_{1}^{(s)} \to U_{2}^{(s)} \to U_{3}^{(s)}$.
By the choice of $t_2$, $V(Q_{t_2})-X_1 \neq \emptyset$.
If $u \in \bigcup_{i=1}^{t_2-1}{V(Q_{i})}$, then $u$ has an out-neighbor in $V(Q_{t_2})-X_1$.
Suppose $u \in V(Q_{t_2})$.
Since $u \in X_1$ and $V(Q_{t_2})-X_1 \neq \emptyset$, $Q_{t_2}$ is a nontrivial strong component and so $u$ has an out-neighbor in $Q_{t_2}-X_1$.
Therefore $u \to v_{t_2}$ for some vertex $v_{t_2} \in V(Q_{t_2})-X_1$ in both cases.
Since $v_{t_1} \in X_1$, $u \to v_{t_2} \to v_{t_1} \to U_{3}^{(s)}$.
  If $j=2$, then $u \to v_{t_1} \to U_{3}^{(s)}$ and $v \to U_{2}^{(s)} \to U_{3}^{(s)}$; if $j=3$, then $u \to v_{s-1} \to U_{1}^{(s)}$ and $v \to U_{3}^{(s)} \to U_{1}^{(s)}$.
Therefore, in each case, $u$ and $v$ have an $m$-step common prey for some positive integer $m$.
Thus, if $u \in X_j$ and $v \in U_{j+2}^{(s)}$ for some $j \in \{1,2,3\}$, then $u$ and $v$ are adjacent in $G$ by ($\ast$).
Therefore every vertex in $\bigcup_{i=1}^{t_2}{V(Q_{i})}$ and every vertex in $ V(Q_s)$ are adjacent in $G$.
Hence, by (a), (b), (c1), (d1), (c2), and (d2), $G \cong G_3$ where
\[
K^{(1)}=K\left[\bigcup_{i=1}^{t_2}V(Q_i)\right], \quad
K^{(3)}=K\left[\bigcup_{i=t_2+1}^{t_1}V(Q_i)\right], \quad
K^{(4)}=K\left[\bigcup_{i=t_1+1}^{s-1}V(Q_i)\right],
\]
\[
K^{(5)}=K\left[U_{1}^{(s)}\right], \quad K^{(6)}=K\left[U_{2}^{(s)}\right], \quad \mbox{and} \quad K^{(7)}=K\left[U_{3}^{(s)}\right].\qedhere
\]
\end{proof}

\begin{Thm}\label{thm:unusual}
Suppose that a multipartite tournament $D$ has ordered strong components $Q_1,Q_2,\ldots, Q_s$ and $\kappa(Q_s)=3$ for some integer $s \ge 2$.
  Then the limit of the graph sequence $\{C^m(D)\}_{m=1}^{\infty}$ is isomorphic to $G_3$ given in Figure~\ref{fig:graphs}.
  Further, if $V(Q_{s-1}) \cup V(Q_s)$ does not induce an unusual digraph, then $K^{(2)}$ does not exist.
\end{Thm}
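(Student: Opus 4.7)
The plan is to case-split on the structure of the penultimate strong component $Q_{s-1}$. If $Q_{s-1}$ is trivial, Theorem~\ref{thm:s-1trivial} applies directly and yields $G \cong G_3$ with $K^{(2)}$ empty; this is consistent with the ``further'' clause since a trivial $Q_{s-1}$ cannot produce an unusual pair with $Q_s$ (``unusual'' requires $\kappa(Q_{s-1})=3$). So assume from now on that $Q_{s-1}$ is nontrivial. By Corollary~\ref{Prop:converge}, the subgraph of the limit $G$ induced by $V(Q_s)$ is $\bigcup_{i=1}^{3} K\left[U_i^{(s)}\right]$, furnishing the three candidate cliques for $K^{(5)}, K^{(6)}, K^{(7)}$. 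By Lemma~\ref{lem:clique}(i), $V(D) - V(Q_s)$ induces a single clique in $G$, so all of $K^{(1)}, K^{(2)}, K^{(3)}, K^{(4)}$ sit inside it; the only remaining question is the adjacency across $V(D) - V(Q_s)$ and $V(Q_s)$.

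If $\kappa(Q_{s-1}) \ne 3$, then by Lemma~\ref{lem:clique}(ii) every vertex in $V(D) - V(Q_s)$ is adjacent to every vertex in $V(Q_s)$, so $G \cong G_3$ with $K^{(2)}, K^{(3)}, K^{(4)}$ absent and $K^{(1)} = V(D) - V(Q_s)$. If $\kappa(Q_{s-1})=3$ but $V(Q_{s-1}) \cup V(Q_s)$ does not induce an unusual digraph, combining both halves of Lemma~\ref{lem:clique}(iii) again yields full adjacency and the same conclusion, as required by the ``further'' clause.

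The only remaining, and hardest, case is $\kappa(Q_{s-1})=3$ together with $V(Q_{s-1}) \cup V(Q_s)$ inducing an unusual digraph. Here Lemma~\ref{lem:clique}(iii) only handles the adjacency of $V(D) - (V(Q_{s-1}) \cup V(Q_s))$ with $V(Q_s)$; I must separately determine, for $u \in V(Q_{s-1})$ and $v \in V(Q_s)$, whether $u$ and $v$ are adjacent in $G$. The unusual hypothesis supplies $j \in \{0,1,2\}$ with $U_i^{(s-1)}$ partite-related to $U_{i+j}^{(s)}$ for each $i \in \{1,2,3\}$; together with \eqref{eqn:bi} this forces $U_i^{(s-1)} \subseteq V_{i+j}$ (indices mod $3$). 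I will then track, for $u \in U_i^{(s-1)}$ and large $m$, the residue class (mod $3$) of the endpoint of a length-$m$ directed walk from $u$ landing in $V(Q_s)$: if the walk stays in $Q_{s-1}$ for $a$ steps, jumps into a non-partite-related class of $Q_s$ (namely $U_{i+a+j+1}^{(s)}$ or $U_{i+a+j+2}^{(s)}$, both reachable since every arc between $Q_{s-1}$ and $Q_s$ must go from $Q_{s-1}$ to $Q_s$), and then takes $m-a-1$ further steps in $Q_s$, the endpoint lies in $U_{i+j+m}^{(s)} \cup U_{i+j+m+1}^{(s)}$, independently of $a$. Meanwhile $v \in U_k^{(s)}$ reaches $U_{k+m}^{(s)}$ after $m$ steps. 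For $m$ large every such residue class is realized by an actual walk (using $\kappa(Q_{s-1})=\kappa(Q_s)=3$ to fill in residues), so $u$ and $v$ share an $m$-step common prey precisely when $k \equiv i+j$ or $i+j+1 \pmod 3$, and the residues forbid a common prey for every $m$ when $k \equiv i+j+2 \pmod 3$. By $(\ast)$, $u$ and $v$ are adjacent in $G$ iff $v \notin U_{i+j+2}^{(s)}$.

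Consequently $U_1^{(s-1)}, U_2^{(s-1)}, U_3^{(s-1)}$ each miss a distinct element of $\{U_1^{(s)}, U_2^{(s)}, U_3^{(s)}\}$, matching exactly the $3 \times 3$ adjacency pattern between $\{K^{(2)}, K^{(3)}, K^{(4)}\}$ and $\{K^{(5)}, K^{(6)}, K^{(7)}\}$ encoded by $M_3$ in Figure~\ref{fig:graphs}. Setting $K^{(1)} := V(D) - (V(Q_{s-1}) \cup V(Q_s))$ (which is adjacent to everything by Lemma~\ref{lem:clique}(iii) and part of the big clique by Lemma~\ref{lem:clique}(i)) and choosing the appropriate bijection between the $U_i^{(s-1)}$ and $\{K^{(2)}, K^{(3)}, K^{(4)}\}$ and between the $U_k^{(s)}$ and $\{K^{(5)}, K^{(6)}, K^{(7)}\}$ finishes the identification $G \cong G_3$. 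The main obstacle is exactly this modular walk-length bookkeeping in the unusual case; once the cyclic structure is made explicit, realizing walks of each prescribed length for $m$ large is routine.
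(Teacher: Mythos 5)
Your proposal is correct and follows essentially the same route as the paper's proof: the same case split on whether $V(Q_{s-1})\cup V(Q_s)$ induces an unusual digraph, the same reliance on Lemma~\ref{lem:clique} and Theorem~\ref{thm:s-1trivial} for the non-unusual cases, and the same modular tracking of where a directed walk crosses from $Q_{s-1}$ into $Q_s$ to rule out edges into $U_{i+j+2}^{(s)}$. The only cosmetic difference is that for the positive adjacencies in the unusual case the paper exhibits a $1$-step common prey directly, whereas you invoke large-$m$ reachability of all residue classes; both work.
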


\begin{proof}
Let $G$ be the limit of the graph sequence $\{C^m(D)\}_{m=1}^{\infty}$.
By Lemma~\ref{lem:clique}(i), $V(D) - V(Q_s)$ forms a clique in $G$.
In addition, by Corollary~\ref{Prop:converge}, $V(Q_s)$ induces $\bigcup_{i=1}^{3}{K{\left[U_{i}^{(s)}\right]}}$ in $G$.

{\it Case 1.} The digraph induced by $V(Q_{s-1}) \cup V(Q_s)$ is unusual.
Then $\kappa(Q_{s-1}) = 3$.
By Lemma~\ref{lem:clique}(iii), each vertex in $V(D) - \left(V(Q_{s-1}) \cup V(Q_s)\right)$ and each vertex in $V(Q_s)$ are adjacent in $G$.
Accordingly, we only need to figure out the adjacency of each vertex in $V(Q_{s-1})$ and each vertex in $V(Q_s)$.
To this end, take a vertex $x_i$ in $U_{i}^{(s-1)}$ and a vertex $y_j$ in $U_{j}^{(s)}$ for each $i,j \in \{1,2,3\}$.
 Let $X_i$ be the partite set of $D$ including $U_i^{(s)}$ for each $i \in \{1,2,3\}$.
   Without loss of generality, we may assume $U_{i}^{(s-1)} \subseteq X_i$ for each $i \in \{1,2,3\}$.
  For a nonnegative integer $a$ and for each $i=1,2,3$, we identify the subscripts $3a+i$ with $i$ for $U^{(s)}$, $U^{(s-1)}$, and $X$.
  If $j=i$ (resp.\ $j=i+1$), then each vertex in $U_{i+1}^{(s)}$ (resp.\ $U_{i+2}^{(s)}$) is a common prey of $x_i$ and $y_j$.
  Hence, by ($\ast$), $x_i$ and $y_j$ are adjacent in $G$ if $j=i$ or $i+1$.

   Suppose to the contrary that $x_i$ and $y_j$ have an $m$-step common prey $z$ for some positive integer $m$ where $j=i+2$.
  We note that $y_j$ belongs to the last strong component $Q_s$ and $U_{1}^{(s)},U_{2}^{(s)}$, and $U_{3}^{(s)}$ are the sets of imprimitivity of $Q_s$.
  Therefore
  \begin{equation}\label{eqn:z}
    z \in U_{i+2+m}^{(s)}
  \end{equation}
     and there is an $(x_i,z)$-directed walk $W$ of length $m$ in $D$.
  Let $W=w_0 \to w_1 \to \cdots \to w_m$ where $w_0 = x_i$ and $w_m =z$.
  Since $x_i \in V(Q_{s-1})$ and $z \in V(Q_s)$, there is an (by the way, the only) arc $(w_t,w_{t+1})$ on $W$ such that $w_t \in V(Q_{s-1})$ and $w_{t+1} \in V(Q_s)$ for some $t \in \{0,1,\ldots, m-1\}$.
  Then $w_t$ and $w_{t+1}$ belong to distinct partite sets.
  Since $w_0 \in U_{i}^{(s-1)}$, $w_t \in U_{i+t}^{(s-1)}$.
  Therefore $w_{t+1}$ belongs to $U_{i+t+1}^{(s)}$ or $U_{i+t+2}^{(s)}$.
  Then $w_m$ belongs to $U_{i+m}^{(s)}$ or $U_{i+m+1}^{(s)}$ which contradicts \eqref{eqn:z}.
  Since $x_i$ and $y_j$ were arbitrarily chosen, we may conclude that $j \equiv i+2 \pmod 3$
   \begin{tabbing}
       $\Leftrightarrow$ \= for any positive integer $m$, no vertex in $U_{i}^{(s-1)}$ and no vertex in $U_{j}^{(s)}$ have an $m$-step\\
        \> common prey \\
     $\Leftrightarrow$ \> there is no edge between $U_{i}^{(s-1)}$ and $U_{j}^{(s)}$ in $G$.
   \end{tabbing}
Hence $G \cong G_3$ where
\[
K^{(1)}=K\left[\bigcup_{i=1}^{s-2}V(Q_i)\right], \quad K^{(2)}=K\left[U_{1}^{(s-1)}\right], \quad K^{(3)}=K\left[U_{3}^{(s-1)}\right], K^{(4)}=K\left[U_{2}^{(s-1)}\right],
\]
\[
K^{(5)}=K\left[U_{1}^{(s)}\right], \quad K^{(6)}=K\left[U_{2}^{(s)}\right], \quad \mbox{and} \quad K^{(7)}=K\left[U_{3}^{(s)}\right]
\]
if $V(Q_{s-1}) \cup V(Q_s)$ induces an unusual digraph.

{\it Case 2.} The digraph induced by $V(Q_{s-1}) \cup V(Q_s)$ is not unusual.
 If $Q_{s-1}$ is trivial, then $G  \cong G_3$ by Theorem~\ref{thm:s-1trivial}.
 If $Q_{s-1}$ is nontrivial, then each vertex in $V(D) - V(Q_s)$ and each vertex in $V(Q_s)$ are adjacent in $G$ by (ii) and (iii) of Lemma~\ref{lem:clique}.
 Hence $G=G_3$ where $K^{(2)}$, $K^{(3)}$ and $K^{(4)}$ do not exist and
\[
K^{(1)}=K\left[\bigcup_{i=1}^{s-1}V(Q_i)\right], \quad K^{(5)}=K\left[U_{1}^{(s)}\right], \quad K^{(6)}=K\left[U_{2}^{(s)}\right], \quad \mbox{and} \quad K^{(7)}=K\left[U_{3}^{(s)}\right]
\] if $V(Q_{s-1}) \cup V(Q_s)$ induces a digraph that is not an unusual digraph and $Q_{s-1}$ is nontrivial.
 \end{proof}

\section{A proof of Theorem~\ref{thm:last}}\label{sec:5}
\begin{proof}[Proof of Theorem~\ref{thm:last}]
Suppose that $Q_s$ is nontrivial.
Since $Q_s$ is a strongly connected multipartite tournament, $\kappa(Q_s)\le 4$ by Proposition~\ref{prop:index}.
  If $\kappa(Q_s)=1$, then $G=K[V(D)]$ by Proposition~\ref{prop:lastprimitive}.
  Thus it remains to consider the cases $\kappa(Q_s)=2,3,4$.

  Suppose $s=1$.
  Then, by Theorem~\ref{prop:strong}, the following are true:
  if $\kappa(Q_s)=2$, then  $G \cong G_1$  where $K^{(1)}$ does not exist;
  if $\kappa(Q_s)=3$, then  $G \cong G_3$  where $K^{(i)}$ does not exist for each positive integer $i \le 4$;
  if $\kappa(Q_s)=4$, then  $G \cong  G_2$  where $K^{(i)}$ does not exist for each positive integer $i \le 3$.

  Now suppose $s \ge 2$.
  If $\kappa(Q_s)=2$ (resp.\ $\kappa(Q_s)=4$), then $G\cong G_1$ (resp.\ $G \cong G_2$) by Theorems~\ref{thm:bipartite} and  \ref{Thm:tritrinontri}.
  If $\kappa(Q_s)=3$, then $G \cong  G_3$ by Theorem~\ref{thm:unusual}.
\end{proof}

\section{Disclosure statement}
No potential conflict of interest was reported by the authors.

\section{Funding}
This research was supported by the National Research Foundation of Korea(NRF)  (NRF-2022R1A2C1009648 and NRF-2017R1E1A1A03070489) funded by the Korea government(MSIP).


\end{document}